\documentclass[11pt]{amsart}
\usepackage{amsmath, amssymb, amsthm}
\usepackage{graphicx}
\usepackage{xcolor}
\usepackage{booktabs}
\usepackage{amsmath,amssymb, amsthm}
\usepackage[all]{xy}
\usepackage[inline]{enumitem}
\usepackage{tikz}
\usepackage{tikz-cd}
\usepackage[a4paper,left=25mm,right=25mm,top=25mm,bottom=25mm]{geometry}

\usepackage{quiver}
\usepackage[most]{tcolorbox}
\usepackage{mathtools}
\usepackage[colorlinks=true,linkcolor=blue,citecolor=blue,urlcolor=blue]{hyperref}

\theoremstyle{plain}
 \newtheorem{theorem}{Theorem}[section]
 \newtheorem{proposition}[theorem]{Proposition}
 \newtheorem{corollary}[theorem]{Corollary}
 
 \newtheorem{lemma}[theorem]{Lemma}

\theoremstyle{definition}

\newtheorem{example}[theorem]{Example}

\hypersetup{
  colorlinks=true,
  linkcolor=blue!60!black,
  citecolor=blue!60!black,
  urlcolor=blue!60!black
}
 
 \newcommand{\ZZ}{{\mathbb{Z}}}
 
 \newcommand{\RR}{{\mathbb{R}}}
 
 \newcommand{\HH}{H^{*}} 
 \newcommand{\Z}{\mathbb{Z}}
 \newcommand{\R}{\mathbb{R}}
 \newcommand{\Ker}{\mathop{\rm Ker}\nolimits}
 \newcommand{\im}{\mathop{\rm Im}\nolimits}

 \newcommand{\hdim}{\mathop{\rm hdim}\nolimits}
 
 \newcommand{\cl}{\mathop{\rm cl}\nolimits}
 \newcommand{\ev}{\mathop{\rm ev}\nolimits}
 \newcommand{\cat}{\mathop{\rm cat}\nolimits}
 
 \newcommand{\secat}{\mathop{\rm secat}\nolimits}
 
 \newcommand{\tc}{\mathop{\rm tc}\nolimits}
 \newcommand{\pr}{\mathop{\rm pr}\nolimits}
 
 \newcommand{\TC}{\mathop{\rm TC}\nolimits}

 \newcommand{\id}{\mathrm{id}}

 \newcommand{\cd}{\mathrm{cd}}

 \DeclareMathOperator{\Aut}{Aut}

\newcommand{\bigrt}{\mathop{\rtimes}\limits}

\newcommand{\rt}{\rtimes}

\newcommand{\pullbackcorner}[1][dr]{\save*!/#1-1.5pc/#1:(-1,1)@^{|-} \restore}

\begin{document}

\title{Parametrized LS category and group actions}
\author{Urban Ogrinec \and Petar Pavešić}

\address{{Urban Ogrinec,
Institute for Mathematics, Physics and Mechanics, Ljubljana, Slovenia; 
Faculty of Mathematics and Physics, University of Ljubljana, Ljubljana, Slovenia}}
\email{urban.ogrinec@fmf.uni-lj.si}

\address{{Petar Pavešić,
Institute for Mathematics, Physics and Mechanics, Ljubljana, Slovenia; 
Faculty of Mathematics and Physics, University of Ljubljana, Ljubljana, Slovenia}}
\email{petar.pavesic@fmf.uni-lj.si}

\thanks{$^{*}$ Supported by the Slovenian Research Agency program P1-0292 and grant J1-4001}

\date{\today}

\begin{abstract}
We introduce a new invariant associated to an action of a group $G$ on a (possibly non-commutative)
group $K$. The invariant is based on the $G$-parametrized LS-category of the semi-direct product
$K\rtimes G$ but it also depends on a choice of a derivation $d\colon G\to K$. 
The invariant is thus denoted $\tc(d)$ and it takes integer values between 
$\cd(K)$ and $\cd(K)+\cd(G)$ (where $\cd(G)$ stands for the cohomological dimension of a group). We develop 
a general theory and then compute its values for a number of important examples, including 
fundamental groups of mapping tori, of pure braid groups and of iterated semi-direct 
products of free groups. To achieve this, we relate $\tc(d)$ to the cohomological dimension
of its factors, to the sectional category of the inclusion of $G$ in $K\rtimes G$, and to 
the properties of torsors that are associated to the derivation $d$.

\ \\[3mm]
{\it Keywords}: fibrewise topology, parametrized LS category, semi-direct product, derivation, sectional category,
cup-length\\
{\it AMS classification: 55R70, 55M30, 20E22, 20J05} 
\end{abstract}

\maketitle

\section{Introduction}\label{sec:Introduction}

In this work, we discuss a new LS-category type invariant associated to an action
of a group on another group. The celebrated Eilenberg-Ganea Theorem 
\cite{EilenbergGanea1957} states that cohomological dimension $\cd(G)$ of a group $G$ equals $\cat(BG)$,
the LS-category of its classifying space, In other words, the theorem relates projective resolutions of 
$\ZZ G$-modules with covers of the classifying space $BG$ by subspaces that contract within $BG$ to a point. 

Given an action of $G$ on another group $K$,  one can construct the corresponding
semi-direct product $K\rtimes G$. By applying the classifying space functor to the short
exact sequence 
$$1\to K\to K\rtimes G\to G\to 1$$
we obtain a fibration sequence
$$ BK\hookrightarrow B(K\rtimes G) \to BG. $$
It can be viewed as a family of copies of $BK$ parametrized by points in $BG$ in a manner that encodes
the action of $G$ on $K$. 
The difference between the $BG$-parametrized LS-category $\cat_{BG}(B(K\rtimes G))$ and 
$\cat(BK)$ is a measure of the complexity of the action of $G$ on $K$. 

The precise definition of our invariant is a bit more involved, because parametrized category also depends 
on the choice 
of a section in the above fibration. In group-theoretic terms, this section is determined by a choice
of a derivation $d\colon G\to K$. Although our invariant is defined as a parametrized category of a fibration,
we will denote it as $\tc(d)$ and call \emph{it the (topological) complexity} of $d$. The reason is that 
in the special 
case where $G$ acts trivially on $K$ and a derivation $d\colon G\to K$ is simply a homomorphism, 
then this invariant equals (somewhat surprisingly) the topological complexity (in the sense of \cite{Pavesic2019a})
of the classifying map $Bd\colon BG\to BK$.

\subsection{Prior work}

In recent years several articles have  discussed LS category-type invariants associated with homomorphisms 
and group actions. In the papers 
\cite{DranishnikovKuanyshov2023, Kuanyshov2023, Kuanyshov2024a,Kuanyshov2024b,DeSahaDranishnikov2024}
authored (in different combinations) by De Saha, Dranishnikov and Kuanyshov cohomological dimension
of a homomorphism $\varphi \colon G\to H$ is defined as the maximal $k$ for which $\varphi$ induces
a non-trivial homomorphism in group cohomology with some coefficients. Furthermore, $\cat(\varphi)$ and
$\TC(\varphi)$ are defined respectively as the category and the topological complexity of the 
map $B\varphi\colon BG\to BH$. The main results of the papers determine various conditions under which 
Eilenberg-Ganea theorem extends as $\cd(\varphi)=\cat(\varphi)$ or $\TC(\varphi)=2\,\cd(\varphi)$.

In a similar vein, Espinosa Baro, Farber, Mescher and Oprea \cite{EBFMO} 
examined the sectional category of the covering space projection $BH\to BG$ associated to 
a subgroup inclusion $H\hookrightarrow G$ and derive a lower bound in terms of $\cd(G)$ and
the cohomological dimensions of some subgroups of $H$.

Grant, Meir and Patchkoria \cite{GrantMeirPatchkoria2022} considered a group $G$ acting on 
another group $\pi$ and the resulting semidirect product $\pi\rtimes G$. They define 
equivariant versions of cohomological dimension $\cd_G(\pi)$, geometric dimension
$\mathrm{gd}_G(\pi)$ and category $\cat_G(\pi)$ in terms of corresponding 
invariants for $\pi\rtimes G$. Similarly as above, their main results concern 
assumptions under which those invariants coincide.

\subsection{Our contribution}

Although our terminology is similar to that used in the above-mentioned papers, 
our approach is different and more in the 
spirit of parametrized topological complexity as introduced in \cite{CohenFarberWeinberger2021}. 
Assume that $G$ is a group acting on another group $K$ by 
automorphisms. Then we have the semi-direct product $K\rtimes G$ and the projection
$p\colon K\rtimes G\to G$. By applying the classifying space functor we obtain a fibration
$Bp\colon B(K\rtimes G)\to BG$ with fibre $BK$. Homotopy invariants of this fibration 
(e.g., the parametrized LS-category or the parametrized topological complexity) reflect 
the complexity of the action of $G$ on $K$ and of the semi-direct product. 
Here we must take into account a technical point: in order to define $BG$-parametrized 
LS.category of $B(K\rtimes G)$ we must fix a section to $Bp$. It is known that every section 
$G\to K\rtimes G$ to $p$ is determined by a derivation $d\colon G\to K$. Thus, we pick a derivation
$d\colon G\to K$ and define $\tc(d)$ as the parametrized LS-category of $B(K\rtimes G)$ over 
$BG$, relative to a section determined by $d$. 

We derive several basic estimates for $\tc(d)$ in terms of the cohomological dimension of the 
groups $G$ and $K$ and in terms of nilpotency of  a certain ideal in the cohomology of the group
$K\rtimes G$. We discuss in considerable detail the case of the trivial action of $G$ on $K$, and show
how it is related to the topological complexity (as defined in \cite{Pavesic2017}) of the map $Bd$.
Returning to the general case, we obtain a very useful alternative description of $\tc(d)$
in terms of the sectional category of an associated covering projection. We study $\tc(d)$ for different
choices of derivations and show that for abelian $K$, the value of $\tc(d)$ depends only on the action
of $G$ on $K$. In the non-abelian case we find an interesting relation with torsors and show that 
$\tc(d)=\tc(d')$ if $d$ and $d'$ represent the same element in the 
set of $\operatorname{Inn(K)}$-torsors for which the Brauer obstruction vanish. In addition, we give 
some estimate for the complexity of derivations that arise in iterated semi-direct products.

In the second part of the paper we apply the general theory to compute the complexity of 
several interesting semi-direct products and corresponding derivations. This includes semi-direct
product decomposition of fundamental groups of mapping tori, standard semi-direct decompositions
of pure braid groups, and iterated semi-direct products of free groups. In the majority of the mentioned examples
we are able to compute the precise values of the invariant.

\subsection{Outline of the paper}
In the following section, we recall some preliminary notions that will be used in the rest of the paper: 
classifying spaces of discrete groups, semi-direct products, and the parametrized LS-category. 
In Section 3 we define the topological complexity of a derivation and derive its main properties.
This includes a detailed examination of the case where the action is trivial (and the semi-direct 
product is actually a direct product) and derivation of some general upper and lower bounds 
based on the cohomological dimensions of the groups involved and on the cup-length of 
certain cohomology rings. Furthermore, we give an alternative description in terms of 
the sectional category of certain covering spaces, derive some new lower bounds that are not based
on cup-length, and study when the topological complexity depends only on the semi-direct product
structure and not on a choice of a derivation. In Section 4 we present extensive computations 
for some important classes of semi-direct products like the fundamental groups of mapping tori, 
pure braid groups and iterated products of free groups.

\section{Preliminaries}\label{sec:Preliminaries}

\subsection{Classifying spaces}
All groups considered in this work are discrete. Given a group $G$ we denote by $BG$ its classifying 
space,
which is, of course, an Eilenberg-MacLanes space of type $K(G,1)$. The space $BG$ is defined only up to 
homotopy and the precise geometry of $BG$ depends on a chosen construction. In specific cases, we will 
opt for
a construction that fits our needs. For example, given a short exact sequence of groups
$$1\to K\to E\to G\to 1$$ 
we may use the simplicial set construction, which is functorial and yields a fibration sequence
$$ BK\hookrightarrow BE \to BG. $$
Alternatively, for a subgroup $K\le G$, we may use the Borel construction to obtain a covering projection 
$BK\to BG$ with fibre $G/K$. In some examples, we will also use particular spaces to model $BG$ like the 
configuration spaces of points in the plane for the pure braid group, or the mapping tori for semi-direct
products with $\ZZ$.

\subsection{Semi-direct products}
Semi-direct products will play a prominent role, so we recall some main facts and refer the reader to 
\cite[Ch. IV]{Brown} for more details. Let $G$ and $K$ be two groups, where the operation in $G$ 
is written as multiplication, and the operation in $K$ is written as addition, even when $K$ 
is \textit{not} abelian. An action of $G$ on $K$ is given by a homomorphism $\varphi\colon G\to\Aut(K)$. 
If the action $\varphi$ is clear from the context we will write $g\cdot k$ instead of 
$[\varphi(g)](k)$. Moreover, if $K$ is abelian, we will occasionally 
say that $K$ is a $\ZZ G$-module.
The \emph{semi-direct product} $K\rtimes G$ is a group whose 
underlying set is $K\times G$ and the multiplication is given by
$$(k,g)\cdot (k',g'):=(k+[\varphi(g)](k'),g\cdot g').$$
The semi-direct product fits into a short exact sequence 
$$1 \to K \xrightarrow{i} K \rtimes G \xrightarrow{p} G \to 1,$$
where $i(k)=(k,1)$ and $p(k,g)=g$. There is a canonical splitting (right-inverse group homomorphism) 
for the map $p$, given by $s_0(g)=(0,g)$ but this is not the only possibility. 
In fact, given a function $d\colon G\to K$, then 
$s_d(g):=(d(g),g)$ is a splitting of $p$ if, and only if, $d$ is a \emph{derivation} (or crossed-homomorphism), 
i.e. it satisfies $d(g\cdot h)=d(g)+\varphi(g)(d(h))$. Note that if $G$ acts trivially on $K$, then derivations
$G\to K$ are just ordinary homomorphisms. If $K$ is a $\ZZ G$-module, then it is known that 
the first cohomology group $H^1(G;K)$ classifies derivations $G\to K$ modulo principal derivations
(see \cite[Prop. 2.3]{Brown}).

\subsection{Parametrized LS-category} In this section, we recall some basic definitions  
related to the (Lusternik-Schnirelmann) category of a space. Standard references are \cite{CLOT}
for the general theory, and \cite{Garcia-Calcines2014} for the parametrized (or fibrewise) versions.

Let $X$ be any space. A subset $U\subseteq X$ is said to be \emph{categorical} if the inclusion
map $U\hookrightarrow X$ is nul-homotopic. The \emph{LS category} of $X$, denoted $\cat(X)$, 
is the minimal $n$ such that 
there exists a cover $U_0,\ldots,U_n$ of $X$ by open categorical subsets. A more general 
concept is the sectional category of a fibration $p\colon X\to B$. A subset $U\subseteq B$ is 
\emph{sectionable} if there exists a section $s\colon U\to X$ for the projection $p$. 
The \emph{sectional category} of a fibration $p\colon X\to B$, denoted $\secat(p)$ 
is the minimal $n$ such that
there exists a cover $U_0,\ldots,U_n$ of $B$ by open sectionable subsets. 
The sectional category of an arbitrary map is defined as the sectional category of its fibrational replacement.

LS category can be expressed in terms of sectional category: given a point $s\in X$ let $P_sX$ denote 
the space of based paths $\mathcal{C}((I,0),(X,s))$ and by $\ev_1\colon P_sX \to X$ the evaluation map 
that to each based path assigns its final point. If $X$ is path-connected, one can easily check that 
$\cat(X)=\secat(\ev_1)$.

Next we extend the above concepts to a parametrized setting.
Let $B$ be any space. A \emph{fibrewise pointed} space over $B$ is a projection $p\colon X\to B$,
together with a section $s\colon B\to X$, such that $p\circ s=1_B$. For each $b\in B$ the preimage
$X_b:=p^{-1}(b)\subset X$ is called the \emph{fibre} of $p$ over $b$. We view a fibrewise pointed
space as a continuous family of fibres parametrized by points $b\in B$, together with 
a continuous choice of a base-point 
$s(b)$ at each fibre. If $p\colon X\to B$ is a fibration over a path-connected space $B$, 
then all fibres
are of the same homotopy type. In that case we will talk about a generic fibre $F$ of $p$. 

A subspace $U\subseteq X$ is \emph{$B$-categorical} if the inclusion $U\hookrightarrow X$ is 
fibrewise nul-homotopic, i.e. there exists a homotopy $H\colon U\times I\to X$, such that 
for all $x\in X$ and $t\in I$ we have $p(H(x,t))=p(x)$, $H(x,0)=x$ and $H(x,1)=s(p(x))$. 
The \emph{$B$-parametrized LS category} of a fibrewise pointed space $X$ over $B$, denoted $\cat_B(X)$, 
is the minimal $n$ such that there exists a cover $U_0,\ldots,U_n$ of $X$ by open 
$B$-categorical subsets.

Parametrized category can be also expressed in terms of sectional category. To this end we define 
the space 
$$P_BX:=\{\alpha\in\mathcal{C}(I,X)\mid p\circ\alpha=\mathrm{const}, \alpha(0)=s(p(\alpha(0))) \}.$$
In plain words, a path in $X$ is an element of $P_BX$ if its image is contained in a fibre and if its
initial point is contained in $s(B)$. There is a map $\ev_1\colon P_BX\to X$ that to each path
in $P_BX$ assigns its final point. In the following theorem we list some of the main properties of the 
parametrized LS category.

\begin{theorem}\label{thm:catB(X)}
Assume $p\colon X\to B$ is a fibration with a section $s\colon B\to X$ and a path-connected fibre $F$.
Then
\begin{enumerate}[wide, labelwidth=!]
\item $\ev_1\colon P_BX\to X$ is a fibration with fibre $\Omega F$.

\item $\cat_B(X)=\secat(\ev_1).$

\item $\cat(F)\le\cat_B(X)\le \cat(X)$

\item Let $H^*$ denote any cohomology theory, and let $s^*\colon H^*(X)\to H^*(B)$ be the 
homomorphism induced by the section $s\colon B\to X$. Then  
$\cat_B(X)\ge \cl(\Ker s^*)$, 
where $\cl(R)$ of a ring $R$ denotes its cup-length, i.e. the maximal number of factors from $R$
whose product is non-zero.
\end{enumerate}
\end{theorem}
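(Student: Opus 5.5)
We prove the four items in order; each later item uses the earlier ones.

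\emph{Item (1).} We realize $\ev_1$ as a pullback of a standard fibration. Consider the space $X^I_B$ of paths $\alpha\colon I\to X$ with $p\circ\alpha$ constant. Since $p$ is a fibration, the map $X^I_B\to X\times_B X$, $\alpha\mapsto(\alpha(0),\alpha(1))$, is a fibration; this is the standard fibrewise path fibration. Pulling it back along $X\to X\times_B X$, $x\mapsto (s(p(x)),x)$, gives exactly $\ev_1\colon P_BX\to X$. Hence $\ev_1$ is a fibration. Its fibre over $x$ consists of the paths in the fibre $X_{p(x)}$ from $s(p(x))$ to $x$. Since $X_{p(x)}\simeq F$ is path-connected, this path space is homotopy equivalent to $\Omega F$.

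\emph{Item (2).} Suppose $U\subseteq X$ is $B$-categorical via a fibrewise homotopy $H$. Define $\sigma(x)(t)=H(x,1-t)$. Then $\sigma(x)$ is a path in the fibre starting at $s(p(x))$ and ending at $x$, so $\sigma$ is a local section of $\ev_1$ over $U$. Conversely, a local section $\sigma$ over $U$ gives the fibrewise null-homotopy $H(x,t)=\sigma(x)(1-t)$. So $B$-categorical open sets and sectionable open sets coincide, and $\cat_B(X)=\secat(\ev_1)$.

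\emph{Item (3), upper bound.} Take a categorical open set $U\subseteq X$; we want to show it is $B$-categorical. Since $X$ is path-connected (because $B$ and $F$ are), we may assume $U$ contracts in $X$ to the point $s(b_0)$. Call this contraction $h\colon U\times I\to X$. We must convert $h$ into a fibrewise contraction to the section.
\begin{itemize}
\item Apply the homotopy lifting property of $p$ to the homotopy $p\circ h$, using the homotopy $t\mapsto s(p(h(x,1-t)))$ as a guide. Concretely, define $k(x,t)=s(p(h(x,t)))$, a path from $s(p(x))$ to $s(b_0)$.
\item Concatenate $h(x,-)$ with the reverse of $k(x,-)$. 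This gives, for each $x$, a path $\gamma_x$ from $x$ to $s(p(x))$ whose projection $p\circ\gamma_x$ is a loop at $p(x)$ that is null-homotopic rel endpoints: it is $q$ followed by $\bar q$, where $q=p\circ h(x,-)$.
\item Lift the canonical contraction of $q\bar q$ using the lifting property for the pair $(I\times I,\ I\times\{0\}\cup\partial I\times I)$, keeping the endpoint on the section. The result is a vertical path from $x$ to $s(p(x))$, depending continuously on $x$.
\end{itemize}
This is the step I expect to need the most care: the relative homotopy lifting must be done continuously in $x$ and must keep the endpoint on $s(B)$. Once it is done, $U$ is $B$-categorical and $\cat_B(X)\le\cat(X)$.

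\emph{Item (3), lower bound.} Restrict a $B$-categorical cover of $X$ to a fibre $X_b$. Restricting the fibrewise homotopies to $X_b$ contracts each set $U_i\cap X_b$ to the point $s(b)$ inside $X_b\simeq F$. Hence $\cat(F)\le\cat_B(X)$.

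\emph{Item (4).} We use the standard sectional category cup-length bound, $\secat(q)\ge\cl(\Ker q^*)$. Apply it to $q=\ev_1$.
\begin{itemize}
\item The map $c\colon X\to P_BX$, $x\mapsto\text{const}_{s(p(x))}$, is a homotopy equivalence. Its homotopy inverse is $\alpha\mapsto\alpha(0)$ composed with $s\circ p$: contracting paths to their start point deforms $P_BX$ into the constant paths, which are identified with $s(B)\cong B$.
\item Under this identification, $\ev_1$ corresponds to $s\circ p$ restricted appropriately. More precisely, $P_BX\simeq B$ and $\ev_1\simeq s\colon B\to X$.
\item Therefore $\Ker\ev_1^*=\Ker s^*$, and $\cat_B(X)=\secat(\ev_1)\ge\cl(\Ker s^*)$.
\end{itemize}
For the standard bound itself: if $U_0,\dots,U_n$ are sectionable, then each $u_i\in\Ker q^*$ restricts to zero on $U_i$, so it lifts to a relative class in $H^*(X,U_i)$. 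The product $u_0\cdots u_n$ then lies in $H^*(X,\bigcup U_i)=0$.
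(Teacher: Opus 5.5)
Your proposal is correct. Items (1), (2), the lower bound in (3), and (4) follow the paper's proof essentially verbatim. For (1) the paper simply cites the appendix of Cohen--Farber--Weinberger. Your pullback of the fibrewise path fibration $X^I_B\to X\times_B X$ along $x\mapsto(s(p(x)),x)$ is the standard way to get that fact.

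The genuine difference is the upper bound $\cat_B(X)\le\cat(X)$. The paper gets it in one line from item (1). If $U$ is categorical, the inclusion $U\hookrightarrow X$ is homotopic to a constant map at some $x_0$. That constant map lifts to $P_BX$, because the fibre of $\ev_1$ over $x_0$ (vertical paths from $s(p(x_0))$ to $x_0$) is nonempty. The homotopy lifting property of $\ev_1$ then gives a section over $U$.

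You instead build the fibrewise null-homotopy by hand from the homotopy lifting property of $p$, lifting the contraction of $q\bar q$ relative to $I\times\{0\}\cup\partial I\times I$. This works. Since $(I\times I,\,I\times\{0\}\cup\partial I\times I)\cong(I\times I,\,I\times\{0\})$, a Hurewicz fibration lifts the whole family $U\times I\times I$ at once, so continuity in $x$ is automatic, and the top edge is a vertical path from $x$ to $s(p(x))$. In effect you re-prove the special case of (1) that is needed. Once you invoke (1), the step you flagged as delicate disappears. Your version is self-contained; the paper's is shorter and reuses what was already established. Incidentally, you do not need $X$ path-connected: move $x_0$ to $s(p(x_0))$ along a path inside its own fibre.

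One correction in (4). The map $c\colon X\to P_BX$, $x\mapsto \mathrm{const}_{s(p(x))}$, is not a homotopy equivalence in general. For example, take $B$ a point and $X=F=S^1$: then $P_BX$ is contractible while $X$ is not. What you need, and what the paper uses, is $c\colon B\to P_BX$, $b\mapsto\mathrm{const}_{s(b)}$, with homotopy inverse $p\circ\ev_0$. Then $\ev_1\circ c=s$ holds on the nose, $c^*$ is an isomorphism, and $\Ker\ev_1^*=\Ker s^*$. Your next sentences ($P_BX\simeq B$ and $\ev_1\simeq s$) show this is what you meant, so it is a slip rather than a gap.
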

\begin{proof}
(1) That $\ev_1$ is fibration follows from \cite[Appendix]{CohenFarberWeinberger2022}. 
Moreover, one can easily check that for $b\in B$ the fibre of $\ev_1$ over $s(b)\in X$ 
is precisely the space $\Omega X_b$ of loops in $X_b$ based at $s(b)$.

(2) Let $U\subseteq X$ and assume that there exists a section $h\colon U\to P_BX$, such that 
$\ev_1\circ h$ equals the inclusion of $U$ in $X$. Note that $(h(x))(1)=x$, therefore 
$p(h(x)(t))=p(x)$ for all $t\in I$ and $(h(x))(0)=s(p(x))$. Thus, if we define $H\colon U\times I\to X$ 
as $H(x,t):=(h(x))(1-t)$, we obtain a fibrewise nul-homotopy for $U$, which implies that $U$ is 
$B$-categorical. Conversely, if $U\subseteq X$ is $B$-categorical, then a fibrewise nul-homotopy of $U$
yields a section of $\ev_1$ over $U$. Thus we have a correspondence between $B$-categorical and 
$\ev_1$-sectionable subsets of $X$, which proves the assertion.

(3) Take some $b\in B$ and observe that the restriction of  $\ev_1\colon P_BX\to X$ to $X_b\subset X$ 
is precisely $\ev_1\colon P_{s(b)}X_b\to X_b$. Since sectional category decreases upon restriction 
we obtain the lower estimate $\cat(F)\le\cat_B(X)$. On the other hand, since 
$\ev_1\colon P_BX\to X$ is a fibration,  every categorical subset of $X$ admits a section, therefore 
$\cat_B(X)\le \cat(X)$.

(4) A standard lower estimate for sectional category of a fibration $p\colon E\to B$ is 
$\secat(p)\ge \cl(\Ker p^*)$. Let us consider the following commutative diagram
$$\xymatrix{
B\ar[rr]^c \ar[dr]_s & &P_BX \ar[dl]^{\ev_1}\\
& X
}$$
where the map $c$ assigns to every $b\in B$ the constant loop in $X$ at the point $s(b)$. 
The map $c$ is a homotopy equivalence, its homotopy inverse being given by the 
evaluation at the initial point $p\circ \ev_0\colon P_BX\to B$. By applying a cohomology
theory $H^*$ we obtain a commutative diagram
$$\xymatrix{
H^*(B)  & &H^*(P_BX) \ar[ll]_{c^*\ \ \cong}\\
& H^*(X) \ar[ur]_{\ev_1^*}\ar[ul]^{s^*}
}$$
which implies that $\Ker \ev_1^*=\Ker s^*$, therefore $\secat(\ev_1)\ge \cl(\Ker\ev_1^*)=\cl(\Ker s^*)$.
\end{proof}

\section{Topological complexity of a derivation}\label{sec:Topological complexity of a derivation}

In this section, we define and study a new invariant associated to derivations and semi-direct 
products. 

\subsection{Definition of $\tc(d)$}

Let $(G,\cdot)$ and $(K,+)$ be two groups, and let a homomorphism $\varphi\colon G\to \Aut(K)$ 
determine an action of $G$ on $K$. Furthermore, let $d\colon G\to K$ be a derivation relative to 
the action $\varphi$. Then we may 
construct the semi-direct product $K\rtimes G$ and a homomorphism $s_d\colon G\to K\rtimes G$
given by $s_d(g):=(d(g),g)$. Clearly, $s_d$ is a splitting for the projection 
$p\colon K\rtimes G\to G$. By applying a suitable classifying space construction, we obtain
a fibration sequence
$$BK\hookrightarrow B(K\rtimes G)\stackrel{Bp}{\longrightarrow} BG.$$
Moreover, the splitting $s_d$ determines a section $B(s_d)$ to the projection $Bp$. If we construct 
the space $P_{BG}B(K\rtimes G)$ with respect to this section, we obtain a fibrewise-pointed 
space over $BG$ 
and we may define the \emph{complexity of the derivation $d$} as
$$\tc(d):=\cat_{BG}(B(K\rtimes G))=\secat\big(\ev_1\colon P_{BG}B(K\rtimes G)\to B(K\rtimes G)\big).$$

\subsection{Trivial action} As an extended example, let us first consider the special 
case where $G$ acts trivially on $K$ 
(i.e., $\varphi(g)=1_K$ for all $g\in G$). In that case $K\rtimes G=K\times G$ 
 and the derivations $d\colon G\to K$ are actually homomorphisms. Pick a homomorphism $d\colon G\to K$ 
 and consider the corresponding section $s_d=(d,1)\colon G\to K\times G$ to the projection map 
 $p\colon K\times G\to G$. To compute $P_{BG}B(K\times G)$ we rely on the following lemma.

 \begin{lemma}
 Given a map $f\colon B\to F$ let $s=(1,f)\colon B\to B\times F$ be a section to the projection 
 $p\colon B\times F\to B$. Then the following diagram is a pull-back
 $$
\xymatrix{
P_B(B\times F) \ar[d]_{\ev_1} \pullbackcorner \ar[r] & F^I\ar[d]^{\ev_{0,1}} \\
B\times F \ar[r]_{f\times 1} & F\times F
}$$ 
\end{lemma}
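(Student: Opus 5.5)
We plan to identify $P_B(B\times F)$ explicitly and compare it with the standard pull-back of $\ev_{0,1}\colon F^I\to F\times F$ along $f\times 1$. First we unwind the definition. With the section $s(b)=(b,f(b))$, an element of $P_B(B\times F)$ is a path $\alpha\colon I\to B\times F$ such that $p\circ\alpha$ is constant, say equal to $b$, and $\alpha(0)=s(b)=(b,f(b))$. Writing $\alpha(t)=(b,\gamma(t))$, such a path is the same thing as a pair $(b,\gamma)\in B\times F^I$ with $\gamma(0)=f(b)$. The evaluation map is then $\ev_1(b,\gamma)=(b,\gamma(1))$.

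Next we write down the canonical pull-back
$$Q:=\{((b,x),\gamma)\in (B\times F)\times F^I \mid \gamma(0)=f(b),\ \gamma(1)=x\},$$
whose projections are $((b,x),\gamma)\mapsto (b,x)$ and $((b,x),\gamma)\mapsto\gamma$. We then define $\Phi\colon P_B(B\times F)\to Q$ by $\alpha=(b,\gamma)\mapsto ((b,\gamma(1)),\gamma)$, and $\Psi\colon Q\to P_B(B\times F)$ by $((b,x),\gamma)\mapsto (t\mapsto (b,\gamma(t)))$. It is immediate that these are mutually inverse. Under $\Phi$, the map $\ev_1$ corresponds to the first projection of $Q$, and the map to $F^I$ in the diagram is $\alpha\mapsto \pr_F\circ\alpha$. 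Hence the square commutes, and it is a pull-back because $P_B(B\times F)$ is identified with the canonical pull-back $Q$.

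The only point needing care is continuity, in the compact-open topology, of $\Phi$, $\Psi$ and of the top map. We handle this with the natural homeomorphism $(B\times F)^I\cong B^I\times F^I$ together with the facts that evaluation at a point of $I$ is continuous and that $B\hookrightarrow B^I$ (constant paths) is an embedding. With these, continuity is routine, so we expect no real obstacle. The main step is the initial identification of fibrewise paths with pairs $(b,\gamma)$ satisfying $\gamma(0)=f(b)$.
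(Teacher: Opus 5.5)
Your proposal is correct and is essentially the paper's proof: the paper also unwinds $P_B(B\times F)$ as pairs $(b,\gamma)\in B\times F^I$ with $\gamma(0)=f(b)$. It then rewrites this as $\{((b,x),\gamma)\mid \gamma(0)=f(b),\ \gamma(1)=x\}=(B\times F)\times_{F\times F}F^I$ and notes that $\ev_1$ becomes the projection to $B\times F$. Your explicit inverse maps $\Phi,\Psi$ and the compact-open continuity check via $(B\times F)^I\cong B^I\times F^I$ only make precise what the paper leaves implicit, namely that this set-theoretic identification is a homeomorphism.
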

\begin{proof}
 The result follows by direct computation:
 \begin{align*}
  P_B(B\times F) & =\{\alpha\colon I\to B\times F\mid \pr_B\alpha=\mathrm{const}_b, 
 \alpha(0)=(b,f(b))\}= \\
 & = \{(b,\alpha)\in B\times F^I|\alpha(0)=f(b)\}=\\
 & = \{((b,,x),\alpha)\in (B\times F)\times  F^I|\alpha(0)=f(b), \alpha(1)=x\}=\\
 & = (B\times F)\times_{F\times F} F^I
 \end{align*}
 Moreover, under the above identification, the map $\ev_1$ corresponds to the projection to $B\times F$.
\end{proof}
By applying the lemma to our specific situation, we obtain a pull-back diagram
$$
\xymatrix{
P_{BG}(BG\times BK) \ar[d]_{\ev_1} \pullbackcorner \ar[r] & (BK)^I\ar[d]^{\ev_{0,1}} \\
BG\times BK \ar[r]_{(Bd)\times 1} & BK\times BK
}$$ 
This turns out to be precisely the diagram used in the description of the topological complexity of 
the map $Bd$ in  \cite[Thm 4.9]{Pavesic2019a} (see also discussion after \cite[Thm 4.16]{Pavesic2019a}). 
Thus we obtain the following result:

\begin{proposition}\label{prop:trivial action}
If $G$ acts trivially on $K$ and if $d\colon G\to K$ is any derivation (homomorphism), then 
$$\tc(d)=\TC(Bd).$$   
\end{proposition}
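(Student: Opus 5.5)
We prove the statement by identifying both sides as the sectional category of one and the same map, namely the pull-back of the free path fibration of $BK$ along $(Bd)\times 1$.

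\emph{Step 1: the model.} Since the action is trivial, $K\rtimes G=K\times G$. We choose the product model $B(K\times G)=BG\times BK$; any other functorial model is fibre homotopy equivalent over $BG$ to this one, compatibly with the sections. Under this identification $Bp$ becomes the projection $BG\times BK\to BG$, and the section $B(s_d)$ becomes $(1,Bd)$.

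\emph{Step 2: the left-hand side.} With $B=BG$, $F=BK$ and $f=Bd$, the preceding lemma says that $\ev_1\colon P_{BG}(BG\times BK)\to BG\times BK$ is the pull-back of $\ev_{0,1}\colon (BK)^I\to BK\times BK$ along $(Bd)\times 1$. By the definition of $\tc(d)$ and Theorem~\ref{thm:catB(X)}(2),
$$\tc(d)=\cat_{BG}(BG\times BK)=\secat(\ev_1),$$
and hence
$$\tc(d)=\secat\big((Bd\times 1)^*\ev_{0,1}\big).$$

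\emph{Step 3: the right-hand side.} Next we invoke \cite[Thm 4.9]{Pavesic2019a} and the discussion after \cite[Thm 4.16]{Pavesic2019a}. These characterise $\TC(f)$, for a map $f\colon X\to Y$ (for us a fibration up to replacement), as the sectional category of the pull-back of $\ev_{0,1}\colon Y^I\to Y\times Y$ along $f\times 1\colon X\times Y\to Y\times Y$. Taking $f=Bd$ gives exactly the pull-back from Step 2, so $\tc(d)=\TC(Bd)$.

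\emph{Main obstacle.} The delicate point is matching hypotheses and conventions in Step 3. The characterisation in \cite{Pavesic2019a} is stated for fibrations, and $Bd$ need not be one. We handle this by factoring $Bd$ through its fibrational replacement $BG\simeq E\to BK$. This replaces $BG\times BK$ by the homotopy equivalent $E\times BK$, and the pull-backs of $\ev_{0,1}$ correspond under this equivalence. Since sectional category is invariant under pulling back along homotopy equivalences, both sides are unchanged. We must also check the normalisation: both invariants count $n+1$ open sets as value $n$, matching the convention $\cat(X)=\secat(\ev_1)$ used earlier. Finally, the order of factors ($X\times Y$ versus $Y\times X$) is harmless, since it only composes with a homeomorphism.
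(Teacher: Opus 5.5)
Your proposal is correct and is essentially the paper's own argument. Like the paper, you take the product model $BG\times BK$ with section $(1,Bd)$, apply the pull-back lemma with $f=Bd$, and recognise the resulting pull-back of $\ev_{0,1}$ along $(Bd)\times 1$ as the diagram characterising $\TC(Bd)$ in Pave\v{s}i\'c's Theorem 4.9. Your extra remarks make explicit two points the paper leaves implicit: replacing $Bd$ by a fibration, justified by the invariance of $\secat$ under pull-back along homotopy equivalences, and checking the reduced normalisation.
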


Observe that by the homotopy invariance property $\TC(Bd)$ is usually called the topological complexity
of the homomorphism $d$. We should also mention that Scott \cite[Sec. 6]{Scott2022} defined a slightly different 
version of the topological complexity of a homomorphism; see \cite{Kuanyshov2024b} for some later developments
and \cite[Section 13.5.3]{Pavesic2024} for a comparison between the two concepts. 

In the following examples the standing assumption is that $G$ acts trivially on $K$.

\begin{example}\label{ex:trivial derivation}
If $d\colon G\to K$ is a trivial derivation, then $Bd$ is nulhomotopic, so Proposition \ref{prop:trivial action}
and \cite[Thm. 13.20(3)]{Pavesic2024} imply that $\tc(d)=\cat(BK)=\cd(K).$    
\end{example}

\begin{example}\label{ex:trivial action with abelian kernel}
If the group $K$ is abelian, then $BK$ is an $H$-space and \cite[Thm. 13.20(8)]{Pavesic2024}   
states that in this case $\TC(Bd)=\cat(BK)$. Thus, we obtain $\tc(d)=\TC(Bd)=\cat(BK)=\cd(K)$ for 
every derivation $d\colon G\to K$. We will extend this result to arbitrary actions in 
Corollary \ref{cor:K abelian}.
\end{example}

\begin{example}\label{ex:F2 identity}
We will see later more instances where different derivations often have the same complexity. 
For an explicit example where complexity depends on the choice of derivation, let $F_2$ denote the 
free group
of rank 2, so that $BF_2$ is the wedge of two circles. Furthermore, let $d_0\colon F_2\to F_2$ be 
the trivial 
derivation and let $d_1\colon F_2\to F_2$ be the identity homomorphism. Then we have
$$\tc(d_0)=\cat(BF_2)=1$$
as explained above, and
$$\tc(d_1)=\TC(1_{BF_2})=\TC(BF_2)=2.$$
\end{example}

\begin{example}\label{ex:Rudyak}
The last example can be extended to show that the range of values due to different 
derivations can be arbitrarily large. Let $G:=\ZZ^k\ast\ZZ^l$, where $0\le k\le l$ and let
$d_0,d_1\colon G\to G$ denote respectively the trivial and the identity derivation.
Then by the results of Rudyak \cite{Rudyak2016} we obtain 
$\tc(d_0)=\cat(G)=l$ and $\tc(d_1)=\TC(G)=l+k$.
\end{example}

\subsection{Basic estimates}
Before considering more complicated examples, we need to develop some general theory. 

\begin{theorem}\label{thm:cd estimates for tc}
Let $d\colon G\to K$ be a derivation relative to an action of $G$ on $K$. Then
$$\cd(K)\le\tc(d)\le \cd(K\rtimes G)\le\cd(K)+\cd(G).$$
\end{theorem}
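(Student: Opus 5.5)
The plan is to read off all three inequalities from Theorem \ref{thm:catB(X)}(3), using the Eilenberg--Ganea theorem to translate between LS category and cohomological dimension. I apply Theorem \ref{thm:catB(X)} to the fibration $Bp\colon B(K\rtimes G)\to BG$ with section $B(s_d)$. Its fibre is $BK$, which is path-connected, so the hypotheses hold. Part (3) then gives
$$\cat(BK)\le \cat_{BG}(B(K\rtimes G))\le \cat(B(K\rtimes G)).$$
By definition the middle term is $\tc(d)$. By Eilenberg--Ganea, $\cat(BK)=\cd(K)$ and $\cat(B(K\rtimes G))=\cd(K\rtimes G)$. This yields the first two inequalities.

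One caveat concerns the cases $\cd=\infty$ or $\cd=2$, where Eilenberg--Ganea is subtle. The equality $\cat(B\pi)=\cd(\pi)$ holds in all dimensions, including the case $\cd=1$ handled by Stallings--Swan. So no exceptional case should arise, and I will cite it as it is stated in the introduction.

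For the last inequality, $\cd(K\rtimes G)\le \cd(K)+\cd(G)$, I use the standard subadditivity of cohomological dimension for extensions $1\to K\to E\to G\to 1$ (see Brown, Ch.~VIII, Prop.~2.4). If either dimension is infinite there is nothing to prove. Otherwise, take $n=\cd(K)+\cd(G)$ and any $\ZZ E$-module $M$. The Lyndon--Hochschild--Serre spectral sequence $H^p(G;H^q(K;M))\Rightarrow H^{p+q}(E;M)$ has $E_2^{p,q}=0$ whenever $p>\cd(G)$ or $q>\cd(K)$. Hence $H^m(E;M)=0$ for all $m>n$, which gives the bound.

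A topological alternative would be to use the fibration to build a model of $B(K\rtimes G)$ of dimension $\cd(K)+\cd(G)$. This runs into the usual Eilenberg--Ganea issue in low dimensions, so I prefer the spectral sequence argument. The only step that needs care is making sure the cited results hold without restrictions, and in particular that the vanishing of $E_2^{p,q}$ is valid for arbitrary coefficient modules. Brown's proposition covers exactly this, so I expect no real obstacle.
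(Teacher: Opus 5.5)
Your proposal is correct and follows the paper's proof: you apply Theorem \ref{thm:catB(X)}(3) to $Bp$ with section $B(s_d)$ and combine it with the Eilenberg--Ganea equality $\cat(B\pi)=\cd(\pi)$ to get the first two inequalities, and the last inequality is Brown's Prop.~VIII.2.4, whose Lyndon--Hochschild--Serre proof you sketch correctly. One small correction: the low-dimensional subtlety you raise concerns geometric dimension (the Eilenberg--Ganea conjecture at $\cd=2$), not category, so, as you conclude, there is no exceptional case.
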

\begin{proof}
The result follows immediately from Theorem \ref{thm:catB(X)}(3) and the Ganea-Eilenberg
equality $\cat(BG)=\cd(G)$. The right-hand side inequality  $\cd(K\rtimes G)\le\cd(K)+\cd(G).$
is also well-known (see \cite[Prop.VIII.2.4]{Brown}).
\end{proof}

Actually, under some mild assumptions, one has $\cd(K\rtimes G)=\cd(K)+\cd(G)$, 
see \cite[Thm. 5.5]{Bieri1977}. The above theorem gives a general interval containing
$\tc(d)$ which is valid for all derivations $d\colon G\to K$. The following result is more
specific and will often result in better estimates.

\begin{theorem}\label{thm:coho estimate for tc}
Let $d\colon G\to K$ be a derivation relative to an action of $G$ on $K$ and let 
$s_d\colon G\to K\rtimes G$ be given as $s_d(g):=(d(g),g)$. Pick any $K\rtimes G$-module
$M$ and denote by $s_d^*\colon H^*(K\rtimes G;M)\to H^*(G; M)$. Then
$$\tc(d)\ge \cl(\Ker s_d^*).$$
\end{theorem}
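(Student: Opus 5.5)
This is a direct translation of Theorem \ref{thm:catB(X)}(4) into group cohomology, so the plan is to reduce to it rather than reprove anything. We take $X=B(K\rtimes G)$, $B=BG$, $p=Bp$ and section $s=B(s_d)$; the fibre $BK$ is path-connected. By the definition of $\tc(d)$ we then have $\tc(d)=\cat_{BG}(B(K\rtimes G))$.

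The argument in the proof of Theorem \ref{thm:catB(X)}(4) needs only a multiplicative cohomology functor on which $c$ induces an isomorphism. It does not need $H^*$ to be a cohomology theory in the strict sense, so we may apply it to cohomology with local coefficients. A $K\rtimes G$-module $M$ is exactly a local coefficient system $\mathcal M$ on $B(K\rtimes G)$. We pull it back along $\ev_1$ to $P_{BG}B(K\rtimes G)$ and along $s$ to $BG$; the latter is the $G$-module $M$ restricted via $s_d$. Since $c$ is a homotopy equivalence and $\ev_1\circ c=s$, we have $c^*(\ev_1^*\mathcal M)=s^*\mathcal M$. Hence $c^*\colon H^*(P_{BG}B(K\rtimes G);\ev_1^*\mathcal M)\to H^*(BG;s^*\mathcal M)$ is an isomorphism, and $\ev_1^*=(c^*)^{-1}\circ s^*$. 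Consequently $\Ker\ev_1^*=\Ker s^*$.

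Under the standard identifications $H^*(B(K\rtimes G);\mathcal M)\cong H^*(K\rtimes G;M)$ and $H^*(BG;s^*\mathcal M)\cong H^*(G;M)$, the map $s^*$ becomes $s_d^*$. Therefore $\Ker \ev_1^*=\Ker s_d^*$.

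The remaining ingredient is the cup-length bound $\secat(\ev_1)\ge\cl(\Ker\ev_1^*)$ with local coefficients. The cup-length must be read appropriately: a product of classes $u_i\in H^*(K\rtimes G;M_i)$ lies in $H^*(K\rtimes G;M_1\otimes\cdots\otimes M_n)$. If $M$ is a ring-module (for example $M=\ZZ$, or any $M$ with a pairing $M\otimes M\to M$), products stay in $M$-coefficients, and the statement concerns that case.

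The standard proof goes as follows. Suppose $\secat(\ev_1)=n$ with a sectionable open cover $U_0,\dots,U_n$. Each $u_i\in\Ker\ev_1^*$ restricts to zero on $U_i$, because $u_i|_{U_i}=\sigma_i^*\ev_1^*u_i=0$ for the section $\sigma_i$. Each $u_i$ therefore lifts to relative cohomology $H^*(X,U_i;\mathcal M)$. The product $u_0\cdots u_n$ lies in the image of $H^*(X,\bigcup U_i)=0$, so it vanishes. This argument works verbatim for local coefficients, since the relative cup product is natural for twisted coefficients with a pairing.

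The main point to check carefully is this compatibility of the relative cup product with local systems and the identification of twisted cohomology of $BG$ with group cohomology via $s_d$. Both are standard, so I expect no real obstacle. Combining the pieces gives $\tc(d)=\secat(\ev_1)\ge\cl(\Ker\ev_1^*)=\cl(\Ker s_d^*)$.
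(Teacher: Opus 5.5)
Your proposal is correct and follows the paper's route: the paper deduces the bound directly from Theorem \ref{thm:catB(X)}(4), using the fact that cohomology of $K\rtimes G$ and $G$ with coefficients in $M$ is cohomology of the classifying spaces with local coefficients. What you add is an explicit check of what the paper leaves implicit, namely that the argument of Theorem \ref{thm:catB(X)}(4) also works for twisted coefficients (the identification $\Ker\ev_1^*=\Ker s^*$ via $c$, and the relative cup-product bound for $\secat$), with products landing in tensor-product coefficients as the paper notes after the statement.
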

\begin{proof}
The result follows from Theorem \ref{thm:catB(X)}(4). It is sufficient to observe that group 
cohomology with coefficients in a $K\rtimes G$-module $M$ corresponds to cohomology of 
its classifying space with local coefficients.
\end{proof}

At this point we should mention that  the cup-product of elements $u\in H^i(G;M), v\in H^j(G;M)$
is, in general, an element $u\cup v\in H^{i+j}(G;M\otimes M)$. In practice, we will mostly 
consider integer coefficients and a more common form of a cup-product with values in $H^*(G;\ZZ).$

\subsection{Alternative description of $\tc(d)$.} 
As we have observed in the proof of Theorem \ref{thm:catB(X)}, the inclusion 
$B\hookrightarrow P_BX$ that to every $b\in B$ assigns the constant loop at $s(b)$ is a homotopy
equivalence. It follows that $P_{BG}B(K\rtimes G)\simeq BG$ and so the homotopy fibre 
of the map $B(s_d)\colon BG\to B(K\rtimes G)$ is discrete, which suggests that the fibration 
$B(s_d)$ could be replaced by a covering space. This leads to a more explicit description of 
$\tc(d)$:

\begin{theorem}\label{thm:tc=secat}
Let $d\colon G\to K$ be a derivation relative to an action of $G$ on $K$ and 
$s_d\colon G\to K\rtimes G$ the corresponding splitting of the semi-direct product.  
Denote by $\widetilde{B(K\rtimes G)}_{\im(s_d)}$ the covering of $B(K\rtimes G)$ determined by
the subgroup $s_d(G)\le K\rtimes G=\pi_1(B(K\rtimes G))$ and by 
$\pi_d\colon\widetilde{B(K\rtimes G)}_{\im(s_d)}\to B(K\rtimes G)$ the corresponding covering
projection. Then\\[2mm]
(1) $\pi_d$ is the covering projection over $B(K\rtimes G)$, whose fibre is the group $K$, viewed as 
a $K\rtimes G$-set with action given by $(k,g)\cdot k':=k+g\cdot k'-d(g)$.

(2) $\tc(d)=\secat(\pi_d).$
\end{theorem}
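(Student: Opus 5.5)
Write $\Gamma:=K\rtimes G$ and $H:=s_d(G)\le\Gamma$. The plan has two parts: part (1) is elementary covering-space theory; part (2) uses the observation before the statement that $P_{BG}B\Gamma\simeq BG$ and that $\ev_1$ is the fibrational replacement of $B(s_d)$.

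\emph{Step 1 (fibre of $\pi_d$).} The covering of $B\Gamma$ associated with $H$ has fibre the coset set $\Gamma/H$, with $\Gamma$ acting by left multiplication. Every coset contains a unique element of the form $(k',1)$:
$$(k,g)=(k-d(g),1)\cdot(d(g),g),$$
since $(k-d(g),1)(d(g),g)=(k-d(g)+d(g),g)$. Hence $k'\mapsto (k',1)H$ is a bijection $K\to\Gamma/H$. To transport the action, compute
$$(k,g)(k',1)=(k+g\cdot k',g)=(k+g\cdot k'-d(g),1)\,s_d(g).$$
This gives exactly $(k,g)\cdot k'=k+g\cdot k'-d(g)$, with the non-commutative addition written in that order.

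\emph{Step 2 (identify $\ev_1$ with $\pi_d$ up to fibre homotopy equivalence).} By Theorem \ref{thm:catB(X)}(1) and the discussion before the statement, $\ev_1\colon P_{BG}B\Gamma\to B\Gamma$ is a fibration. Its fibre is $\Omega BK$, which is homotopy discrete with $\pi_0=K$. Its total space is homotopy equivalent to $BG$ via $c$, and under this equivalence $\ev_1\circ c=B(s_d)$.

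A fibration with homotopy-discrete fibres is fibre homotopy equivalent to a covering: replace the total space by the space of path components of the fibres, or equivalently note that it is determined by its monodromy action of $\pi_1(B\Gamma)$ on $\pi_0(\text{fibre})$. Any covering $E\to B\Gamma$ with $E$ connected and $\pi_1(E)\to\Gamma$ having image $H$ is the covering associated with $H$. The total space here is connected with fundamental group $G$, mapped by $(s_d)_*$ injectively onto $H$. So $\ev_1$ is fibre homotopy equivalent to $\pi_d$.

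Sectional category is invariant under fibre homotopy equivalence over the same base, since sections transport along the equivalences. This gives $\tc(d)=\secat(\ev_1)=\secat(\pi_d)$.

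An alternative route for Step 2 avoids the identification: $\secat$ of a map is by definition that of its fibrational replacement. The covering $\pi_d$ is itself a fibration, and it is a replacement of $B(s_d)$ because $BG$ lifts to $\widetilde{B\Gamma}_{H}$ by a map that is an isomorphism on $\pi_1$. That lift is a homotopy equivalence of $K(\pi,1)$'s.

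\emph{Main obstacle.} The delicate point is the justification that a fibration with homotopy-discrete fibre over a CW base is fibre homotopy equivalent to a covering. I would handle it through the second route just described. Both $\ev_1$ and $\pi_d$ are fibrational replacements of the same map $B(s_d)$, up to the homotopy equivalence $c$. Fibrational replacements of homotopic maps are fibre homotopy equivalent, by the standard comparison of mapping path fibrations. This avoids any explicit analysis of $\pi_0$ of the fibres.
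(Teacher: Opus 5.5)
Your proposal is correct and follows the paper's proof. Part (1) is the same coset decomposition $(k,g)=(k-d(g),1)\cdot s_d(g)$. Your second route for part (2) is the paper's argument: the paper lifts $\ev_1$ along $\pi_d$ by the lifting criterion and notes that the lift is a $\pi_1$-isomorphism of aspherical spaces, hence a homotopy equivalence over $B(K\rtimes G)$, so homotopy sections correspond. You lift $B(s_d)$ from $BG$ and compose with $c$ instead, which amounts to the same thing, so the monodromy-classification route you sketch first is not needed.
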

\begin{proof}
(1) Observe that $\pi_d$ is a regular covering if and only if the action of $G$ on $K$ is trivial. 
A covering over $B(K\rtimes G)$ is classified by  the action of the fundamental
group $K\times G$ on its fibre (cf. \cite[pp. 69-70]{Hatcher}). The fibre of $\pi_d$ is the set of cosets
of the group $K\rtimes G$ with respect to the subgroup $s_d(G)$. Every element of $K\rtimes G$ can 
be decomposed as 
$$(k,g)=(k-d(g),1)\cdot (d(g),g)=(k-d(g),1)\cdot s_d(g).$$
As a consequence, the function $(k,g)\mapsto k-d(g)$ induces a bijection between the fibre
$K\rtimes G/s_d(G)$ and $K$. Furthermore, the equality $(k,g)\cdot (k',1)=(k+g\cdot k,g)$ 
shows that with respect to this bijection, the action of $K\rtimes G$ on $K$ is given by 
$(k,g)\cdot k':=k+g\cdot k'-d(g)$.

(2) Consider the following diagram. 
$$\xymatrix{
 & \widetilde{B(K\rtimes G)}_{\im(s_d)} \ar[d]^-{\pi_d} \\
P_{BG}B(K\rtimes G) \ar[r]_{\ev_1} \ar@{-->}[ru]^{\widetilde{\ev_1}} & B(K\rtimes G)
}$$
By construction $\im (\pi_d)_{_\sharp}=\im (\ev_1)_{_\sharp}$, so 
the lifting criterion for covering spaces implies that there exists a lifting $\widetilde{\ev_1}$
for $\ev_1$ along $\pi_d$ and that it induces isomorphism on the fundamental groups. Since both domain
and codomain are Eilenberg-MacLane spaces, $\widetilde{\ev_1}$ is a homotopy equivalence. As a consequence, 
every homotopy section of $\pi_d$ corresponds to a homotopy section of $\ev_1$ and vice-versa, 
therefore $\tc(d)=\secat(\ev_1)=\secat(\pi_d)$. 
\end{proof}

Sectional category of a covering map is 0 if, and only if, the covering is trivial, which immediately
implies the following characterization.

\begin{corollary}
$\tc(d)=0$ if, and only if $K$ is a trivial group.
\end{corollary}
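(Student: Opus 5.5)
The plan is to combine Theorem \ref{thm:tc=secat}(2), which gives $\tc(d)=\secat(\pi_d)$, with the remark preceding the corollary: the sectional category of a covering map vanishes exactly when the covering admits a global section. For a connected base, a covering with a global section is trivial only when that section picks out a whole sheet. Here this reduces to asking whether the fibre is a single point, since the total space $\widetilde{B(K\rtimes G)}_{\im(s_d)}$ is connected. So I would prove both implications through $\pi_d$.

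For the direction ``$K$ trivial $\Rightarrow \tc(d)=0$'' there is an even quicker route. If $K=1$, then by Theorem \ref{thm:tc=secat}(1) the fibre of $\pi_d$ is a single point, so $\pi_d$ is a homeomorphism and admits a global section. Hence $\secat(\pi_d)=0$. Alternatively, Theorem \ref{thm:cd estimates for tc} gives $0\le\tc(d)\le\cd(K)+\cd(G)$, but with $K$ trivial the lower bound $\cd(K)=0$ alone is not enough, so I would use the covering argument.

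For the converse, assume $\tc(d)=0$. Then $\pi_d$ has a global section $\sigma\colon B(K\rtimes G)\to\widetilde{B(K\rtimes G)}_{\im(s_d)}$; a homotopy section lifts to a genuine section because $\pi_d$ is a fibration. Since $\pi_d\circ\sigma=\id$, on fundamental groups $(\pi_d)_\sharp$ is surjective. But $(\pi_d)_\sharp$ is injective with image the subgroup $s_d(G)$. Therefore $s_d(G)=K\rtimes G$, so the index $[K\rtimes G:s_d(G)]=|K|$ equals $1$, i.e. $K$ is trivial.

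The only delicate point is the fundamental-group argument: one needs basepoints chosen compatibly and the connectedness of the total space of the covering determined by a subgroup. Both are standard, so I expect no real obstacle. A quicker alternative for this direction uses the inequality $\cd(K)\le\tc(d)$: it gives $\cd(K)=0$ and hence $K=1$, since only the trivial group has cohomological dimension $0$.
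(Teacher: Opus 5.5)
Your proposal is correct and follows the paper's route. The paper also deduces the corollary from $\tc(d)=\secat(\pi_d)$ together with the remark that a covering has sectional category $0$ exactly when it is trivial, and your $\pi_1$-surjectivity argument (a global section forces $s_d(G)=K\rtimes G$, i.e.\ a one-sheeted covering since the total space is connected) supplies the detail the paper leaves implicit; your alternative converse via $\cd(K)\le\tc(d)$ is also valid.
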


\subsection{Group-theoretic lower bounds}\label{subs:Group-theoretic lower bounds}
In this section we adapt lower bounds for sectional category 
from \cite{GrantLuptonOprea2015} and \cite{EBFMO} to our situation. The following
lemma is essentially \cite[Cor. 2.3]{GrantLuptonOprea2015} but their statement is slightly different
and the proof is not self-contained, so we reprove the result here for the convenience of the reader.

\begin{lemma}\label{lem:secat of inclusion}
Let $H$ and $K$ be subgroups of a group $G$ such that for every $g\in G$ the conjugate 
subgroup $K^g=g K g^{-1}$ trivially intersects $H$. Then $\secat(\widetilde{BG}_K\to BG)\ge\cd(H)$.    
\end{lemma}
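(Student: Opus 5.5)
The plan is to pull the covering $\pi\colon\widetilde{BG}_K\to BG$ back to $BH$ and show that the pulled-back covering has sectional category at least $\cd(H)$. Sectional category does not increase under pullback, so it suffices to bound the pullback from below.

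Let $j\colon BH\to BG$ be the map induced by the inclusion $H\le G$. The pullback $j^*\pi$ is the covering of $BH$ whose fibre is the $H$-set $G/K$, with $H$ acting by left multiplication. The stabilizer of a coset $gK$ in $H$ is $H\cap gKg^{-1}=H\cap K^g$, which is trivial by hypothesis. So $H$ acts freely on $G/K$, and every component of the total space of $j^*\pi$ is a copy of the universal cover $EH$ of $BH$. Since $\secat(j^*\pi)\le\secat(\pi)$, it is enough to prove
$$\secat(EH\sqcup EH\sqcup\cdots\to BH)\ge\cd(H).$$

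Now take an open $U\subseteq BH$ over which the covering has a section. The image of that section, on each path component of $U$, lies in one component $EH$. A section into $EH$ lifts the inclusion of that component into $BH$, so the component inclusion is trivial on $\pi_1$. As $BH$ is aspherical, the inclusion of each component of $U$ is therefore nullhomotopic. Doing this on every component shows $U$ is categorical, using that $BH$ is connected so that the separate nullhomotopies can be joined to a single point. Conversely, categorical sets are sectionable, because the map lifts to the covering. Hence $\secat$ of this covering equals $\cat(BH)$.

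By the Eilenberg--Ganea equality, $\cat(BH)=\cd(H)$, which finishes the argument.

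The main obstacle is the componentwise step. Sections may land in different sheets over different components of $U$, so one must check that "trivial on $\pi_1$ for each component" really gives a categorical $U$. Asphericity of $BH$ makes this work. A further technical point is that the Eilenberg--Ganea equality needs care when $\cd(H)=2$ or $H$ is infinite-dimensional. I would simply cite the equality as the paper does, or replace it by the cohomological bound $\secat\ge\cl(\Ker)$ with twisted coefficients, since the Berstein class of $H$ is pulled back to zero in $H^*(EH)$.
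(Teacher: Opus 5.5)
Your proposal is correct and is essentially the paper's proof: pull the covering back along $BH\to BG$, identify the components of the pullback through the stabilizers $H\cap K^g$, note that they are all contractible, and conclude $\secat\ge\cat(BH)=\cd(H)$ because sectional category does not increase under pullback. You give more detail than the paper on why such a covering has sectional category $\cat(BH)$. That step is even quicker if you note that a local section factors the inclusion of each component of $U$ through the contractible $EH$. Your caveat about $\cd(H)=2$ is unnecessary: that subtlety concerns geometric dimension rather than category, and only the inequality $\cat(BH)\ge\cd(H)$ is needed, which holds for every group.
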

\begin{proof}
Form the following pull-back diagram, where $\pi_H$ and $\pi_K$ are the covering projections
corresponding respectively to the inclusions of $H$ and $K$ in $G$.
$$\xymatrix{
E \ar[r] \ar[d]_\pi \pullbackcorner & \widetilde{BG}_K\ar[d]^{\pi_K}\\
\widetilde{BG}_H\ar[r]_{\pi_H} & BG 
}$$
Then $\pi\colon E\to \widetilde{BG}_H=BH$ is also a covering projection and the path components 
of $E$ are coverings of 
$BH$ that correspond to subgroups $H\cap K^g$ for $g\in G$, which are all trivial by our assumptions.
We conclude that all components of $E$ are contractible, therefore $\secat(\pi)=\cat(BH)=\cd(H)$. 
Since sectional category decreases along pull-backs, we obtain our claim.
\end{proof}

In \cite{EBFMO} the authors proved the following lower bound for the sectional category of 
the covering projection determined by a subgroup inclusion: let $\Gamma$ be a geometrically finite group 
(in the sense that $B\Gamma$ has the homotopy type of a finite CW-complex) and let 
$\pi\colon BG\to B\Gamma$ be the covering projection induced by the inclusion of some subgroup 
$G\le\Gamma$. Then \cite[Thm. 5.2]{EBFMO} states that
$$\secat(\pi)\ge\cd(\Gamma)-\kappa, $$
where $\kappa=\max\{\cd(G\cap G^\gamma)\mid \gamma\in \Gamma-G\}.$

We are interested in the subgroups $s_d(G)\le  K\rtimes G$, where $s_d$ is a section determined by
a derivation $d\colon G\to K$. In general $\cd(\Gamma)\le\cd(G)+\cd(K)$, and the equality often holds, 
for example if $G$ and $K$ are duality groups (e.g., fundamental groups of closed aspherical manifolds).
We may also considerably simplify computation of $\kappa$ in the case of the standard 
inclusion of $G$ in $K\rtimes G$. First note
that $(k,g)\in K\rtimes G-G$ precisely when $k\ne 0$. Then
$$(k,g)\cdot G\cdot (k,g)^{-1}=(k,1)\cdot G\cdot (k,1)^{-1}=\{(k-g\cdot k,g)\mid k\in K\}.$$
The intersection of $G$ with its conjugate is 
$$G\cap (k,g)\cdot G\cdot (k,g)^{-1}=\{g\in G\mid k-g\cdot k=0\}=\mathrm{Stab}_G(k).$$
Therefore, in that case 
$$\kappa=\max\{\cd(Stab_G(k))\mid k\in K\}.$$ 
In particular, if $G$ fixes some non-zero element of $K$ (e.g., if $G$ acts trivially on $K$),
then $\kappa=\cd(G)$. At the opposite extreme, if $G$ acts freely on $K$, then $\kappa=0$ and,
since $\secat(\pi)\le\cd(G)$ we get $\secat(\pi)=\cd(G)$. 

\begin{example}
A simple example of free action is $\varphi\colon\ZZ\to \mathrm{GL}_n(\ZZ)=\Aut(\ZZ^n)$, $\varphi(k)=A^k$, 
where $A$ is a matrix with the property that none of its eigenvalues is a root of unity. 
In that case, we have $\cd(\ZZ^n\rtimes_A\ZZ)=n+1$ and for the trivial derivation 
$d_0\colon\ZZ\to\ZZ^n$ we obtain $\kappa=0$, therefore $\tc(d_0)=n+1$. Actually, since 
$\ZZ^n$ is abelian, we may apply Corollary \ref{cor:K abelian} to show that 
$\tc(d)=n+1$ for all derivations $d$.
\end{example}

For non-trivial derivations, the computation of $\kappa$ is usually more complicated. 
However, in the following section we will see 
that it is often sufficient to consider only the trivial derivation corresponding to the 
standard inclusion of $G$ in $K\rtimes G$.

\subsection{Invariance properties} In order to define the parametrized LS-category of a map,
it is necessary to choose a specific section. This is why we had to specify both an action 
$\varphi\colon G\to K$ and a derivation $d\colon G\to K$. However, in the majority of our computations, 
all derivations relative to a fixed action give the same value, so in these cases $\tc$ is 
actually dependent
only on the action $\varphi$. On the other hand, as shown in Example \ref{ex:F2 identity} this is not
true in general. In this section we are going to show that the dependence on the choice 
of a specific derivation is related to the existence of automorphisms of $K\rtimes G$ that fix $G$. 
We remind the reader that, despite additive notation, the group $K$ is not assumed to
be commutative.

\begin{lemma}\label{lem:tc(d)=tc(d')}
 Let $d,d'\colon G\to K$ be derivations relative to some action of $G$ on $K$. If there exists 
 an automorphism $\varphi\colon K\rtimes G\to K\rtimes G$ such that $\varphi\circ s_d=s_{d'}$, 
 then $\tc(d)=\tc(d')$. 
\end{lemma}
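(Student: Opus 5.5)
We will use the description of $\tc(d)$ from Theorem \ref{thm:tc=secat}, namely $\tc(d)=\secat(\pi_d)$, where $\pi_d$ is the covering of $B(K\rtimes G)$ corresponding to the subgroup $s_d(G)$. Put $\Gamma:=K\rtimes G$. The idea is that the automorphism $\varphi$ carries the subgroup $s_d(G)$ onto $s_{d'}(G)$, and hence carries the covering $\pi_d$ onto the covering $\pi_{d'}$. Since sectional category is a homotopy invariant of maps, the two coverings must then have the same sectional category.

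In order, the steps are as follows.

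\textbf{Step 1.} Realize $\varphi$ by a self-homotopy equivalence $f:=B\varphi\colon B\Gamma\to B\Gamma$ inducing $\varphi$ on $\pi_1$. This exists because $B\Gamma$ is an Eilenberg--MacLane space, or functorially if we use the simplicial construction. The homotopy inverse is $B(\varphi^{-1})$.

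\textbf{Step 2.} From $\varphi\circ s_d=s_{d'}$ and injectivity of $s_d$ we get $\varphi(s_d(G))=s_{d'}(G)$. We then claim that the pull-back of $\pi_{d'}$ along $f$ is isomorphic, as a covering of $B\Gamma$, to $\pi_d$. To see this, recall that a connected covering $E\to B\Gamma$ pulled back along $f$ corresponds to the preimage subgroup $\varphi^{-1}(s_{d'}(G))=s_d(G)$. The pull-back is connected because $\varphi$ is surjective, so the $\pi_1$-set $\Gamma/s_{d'}(G)$ stays transitive when restricted along $\varphi$. Equivalently, the $\Gamma$-set $\Gamma/s_{d'}(G)$ with action twisted by $\varphi$ is isomorphic to $\Gamma/s_d(G)$ via $x\,s_d(G)\mapsto \varphi(x)\,s_{d'}(G)$.

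\textbf{Step 3.} Since sectional category does not increase under pull-backs, we get $\secat(\pi_d)=\secat(f^*\pi_{d'})\le\secat(\pi_{d'})$. Applying the same argument to $\varphi^{-1}$, which satisfies $\varphi^{-1}\circ s_{d'}=s_d$, gives the reverse inequality. Therefore $\tc(d)=\secat(\pi_d)=\secat(\pi_{d'})=\tc(d')$.

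The only delicate point is Step 2. We have to make sure that the pull-back of a covering along a map that is merely a homotopy equivalence, rather than a homeomorphism, is classified by the restricted $\pi_1$-set. This is standard: the fibre of the pull-back over a point $x$ equals the fibre of the original covering over $f(x)$, with monodromy composed with $f_\sharp=\varphi$. We also need basepoint bookkeeping, since $f$ may move the basepoint, which only changes $\varphi$ by an inner automorphism. This is harmless, because a covering depends only on the conjugacy class of its subgroup.

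As an alternative route that avoids coverings, one could note that $f$ is a fibrewise-pointed map over $BG$ up to homotopy, and invoke homotopy invariance of $\cat_{BG}$. However, $\varphi$ need not commute with the projection to $G$, so the covering argument above is the cleaner choice.
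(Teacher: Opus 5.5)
Your proof is correct and follows the same strategy as the paper. Both arguments use Theorem \ref{thm:tc=secat} to reduce the claim to $\secat(\pi_d)=\secat(\pi_{d'})$, and then transport one covering to the other along the homotopy equivalence $B\varphi$. The difference is only in packaging. The paper writes a homotopy-commutative triangle $B\varphi\circ\pi_d\simeq\pi_{d'}$ and uses invariance of $\secat$ under composition with a homotopy equivalence. You instead identify $\pi_d$ with the pull-back $(B\varphi)^*\pi_{d'}$ through the $\pi_1$-set classification of coverings, and apply monotonicity of $\secat$ under pull-backs in both directions. Your version handles a bit more explicitly the fact that the two coverings have different total spaces.
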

\begin{proof}
By assumption, there is a commutative diagram
$$\xymatrix{
& G \ar[dl]_{s_d}  \ar[dr]^{s_{d'}}\\
K\rtimes G \ar[rr]_{\varphi} & & K\rtimes G
}$$
From this we obtain a homotopy commutative diagram of spaces 
$$\xymatrix{
& {\widetilde{B(K\rtimes G)}_{\im(s_d)}} \ar[dl]_-{\pi_d} \ar[dr]^-{\pi_{d'}}\\
B(K\rtimes G) \ar[rr]_{B\varphi} & & B(K\rtimes G)
}$$
Since $B\varphi$ is a homotopy equivalence, it follows that $\secat(\pi_d)=\secat(\pi_{d'})$, which by
Theorem \ref{thm:tc=secat}(2) implies $\tc(d)=\tc(d')$.
\end{proof}

Let $\varphi$ be an automorphism of $K\rtimes G$, which fixes $G$ in the sense that the following diagram
commutes:
$$\xymatrix{
K\rtimes G \ar[rr]^\varphi \ar[rd]_p & & K\rtimes G \ar[ld]^p\\
& G
}$$
Then for every section $s$ of $p$, $(p\circ\varphi)\circ s=p\circ s=1_G$ implies that
$\varphi\circ s$ is also a section $s$. In view of the above lemma it makes sense to
determine all automorphisms $\varphi$ that fix $G$. Any such automorphism must satisfy
$$\varphi(k,g)=\varphi(k,1)\cdot \varphi(0,g).$$
Since $\varphi(k,1)$ depends only on $k\in K$ it must be of the form 
$\varphi(k,1)=(\alpha(k),1)$ for some homomorphism $\alpha\colon K\to K$. 
Actually, since $\varphi$ sends $K$ to itself, $\alpha$ must be an automorphism.

Similarly, $p\circ \varphi(0,g)=g$ implies that $g\mapsto \varphi(0,g)$ is a section of 
$p$, so it must be of the form $\varphi(0,g)=(\beta(g),g)$ where $\beta\colon G\to K$
is a derivation. We conclude that 
$$\varphi(k,g)=(\alpha(k),1)\cdot (\beta(g),g)=(\alpha(k)+\beta(g),g)$$
for some automorphism $\alpha$ and some derivation $d$. Obviously, the choice of $\alpha$ and $d$ 
is not arbitrary.

\begin{lemma}\label{lem:phi=alpha+beta}
$\varphi\colon K\rtimes G\to K\rtimes G$ is an automorphism that fixes $G$ if, and only if, 
$\varphi(k,g)=(\alpha(k)+\beta(g),g)$, where $\alpha\colon K\to K$ is an automorphism, 
$d\colon G\to K$ is a derivation and the relation $\alpha(g\cdot k)+\beta(g)=\beta(g)+g\cdot\alpha(k)$
holds for all $(k,g)\in K\rtimes G$.
\end{lemma}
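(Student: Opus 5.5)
The plan is to prove both directions by direct computation in $K\rtimes G$, building on the discussion preceding the statement. (The statement writes $d$ where it means $\beta$; I read it as ``$\beta\colon G\to K$ is a derivation''.)

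\emph{Forward direction.} The preceding paragraphs already show that an automorphism fixing $G$ has the form $\varphi(k,g)=(\alpha(k)+\beta(g),g)$. Here $\alpha$ is an automorphism of $K$, because $\varphi$ restricts to a homomorphism on $K=\Ker p$ and is bijective there. Also $\beta$ is a derivation, because $g\mapsto\varphi(0,g)$ is a section of $p$. It remains to extract the compatibility relation. I will compare the two ways of writing $\varphi$ applied to the product $(0,g)\cdot(k,1)=(g\cdot k,g)$. On one side, $\varphi(g\cdot k,g)=(\alpha(g\cdot k)+\beta(g),g)$. On the other side, $\varphi(0,g)\cdot\varphi(k,1)=(\beta(g),g)\cdot(\alpha(k),1)=(\beta(g)+g\cdot\alpha(k),g)$. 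Equating first coordinates gives exactly $\alpha(g\cdot k)+\beta(g)=\beta(g)+g\cdot\alpha(k)$. Addition in $K$ is noncommutative, so I will keep the order of terms throughout.

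\emph{Converse.} Given $\alpha$, $\beta$ and the relation, define $\varphi$ by the formula. I must check three things.
\begin{enumerate}
\item It is a homomorphism. Expand $\varphi((k,g)(k',g'))=\varphi(k+g\cdot k',gg')=(\alpha(k)+\alpha(g\cdot k')+\beta(gg'),gg')$. Use the derivation rule $\beta(gg')=\beta(g)+g\cdot\beta(g')$. The relation rewrites $\alpha(g\cdot k')+\beta(g)$ as $\beta(g)+g\cdot\alpha(k')$. The result is $\alpha(k)+\beta(g)+g\cdot\alpha(k')+g\cdot\beta(g')=\alpha(k)+\beta(g)+g\cdot(\alpha(k')+\beta(g'))$. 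This is precisely the first coordinate of $(\alpha(k)+\beta(g),g)\cdot(\alpha(k')+\beta(g'),g')$.
\item It satisfies $p\circ\varphi=p$. This is immediate from the formula.
\item It is bijective. Fix $g$. The map $k\mapsto\alpha(k)+\beta(g)$ is a bijection of $K$, since $\alpha$ is bijective and right translation is bijective. Hence $\varphi$ is a bijection on each fibre $K\times\{g\}$, and therefore a bijection overall.
\end{enumerate}

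The main obstacle is only bookkeeping. The key step is the homomorphism check, where the relation must be applied with the terms in exactly the right order, because $K$ may be nonabelian. Everything else follows formally from the earlier discussion.
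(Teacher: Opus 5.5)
Your proposal is correct and follows essentially the same route as the paper. The form $\varphi(k,g)=(\alpha(k)+\beta(g),g)$ comes from the preceding discussion, and the compatibility relation comes from comparing $\varphi$ of a product with the product of the images; your product $(0,g)\cdot(k,1)$ is a special case of the paper's general comparison of $\varphi((k,g)(k',g'))$ with $\varphi(k,g)\varphi(k',g')$, and the paper's explicit inverse $\varphi^{-1}(k,g)=(\alpha^{-1}(k)-\alpha^{-1}(\beta(g)),g)$ does the same job as your fibrewise bijectivity argument.
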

\begin{proof}
Given $\alpha$ and $\beta$ as in the statement define $\varphi(k,g):=(\alpha(k)+\beta(g),g)$ and check
whether it defines a homomorphism. We have
$$\varphi((k,g)\cdot(k',g'))=\varphi(k+g\cdot k',g\cdot g')=
(\alpha(k)+\alpha(g\cdot k')+\beta(g)+g\cdot \beta(g'),g\cdot g')$$
and 
$$\varphi(k,g)\cdot\varphi(k',g')=(\alpha(k)+\beta(g),g)\cdot(\alpha(k')+\beta(g'),g')=
(\alpha(k)+\beta(g)+g\cdot \alpha(k')+g\cdot \beta(g'),g\cdot g').
$$
Comparison of the two expressions shows that, $\varphi$ is a homomorphism if, and only if, 
$\alpha(g\cdot k)+\beta(g)=\beta(g)+g\cdot\alpha(k)$ for all $(k,g)\in K\rtimes G$. Actually, 
$\varphi$ is an automorphism, since its inverse is given by 
$\varphi^{-1}(k,g)=(\alpha^{-1}(k)- \alpha^{-1}(\beta(g)),g).$
\end{proof}

We are now ready to prove the main result of this section.

\begin{theorem}\label{thm:tc(d)=tc(ad+b)}
Let $\alpha\colon K\to K$ be an automorphism, and let $d\colon G\to K$ be a derivation such that 
$\alpha(g\cdot k)+\beta(g)=\beta(g)+g\cdot\alpha(k)$ holds for all $(k,g)\in K\rtimes G$.   
Then for every derivation $d\colon G\to K$ $\alpha\circ d+\beta$ is also a derivation and 
$\tc(\alpha\circ d+\beta)=\tc(d)$.
\end{theorem}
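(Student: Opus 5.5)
The plan is to feed the automorphism from Lemma \ref{lem:phi=alpha+beta} into Lemma \ref{lem:tc(d)=tc(d')}. (The statement has a typo: the hypothesis should read ``let $\beta\colon G\to K$ be a derivation''; we read it that way.)

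First, by Lemma \ref{lem:phi=alpha+beta} the compatibility relation
$$\alpha(g\cdot k)+\beta(g)=\beta(g)+g\cdot\alpha(k)$$
guarantees that $\varphi(k,g):=(\alpha(k)+\beta(g),g)$ is an automorphism of $K\rtimes G$ that fixes $G$. Next, for an arbitrary derivation $d$ we compute
$$\varphi\circ s_d(g)=\varphi(d(g),g)=\big(\alpha(d(g))+\beta(g),g\big).$$
Set $d'(g):=\alpha(d(g))+\beta(g)$. Then $\varphi\circ s_d=s_{d'}$ as maps of sets.

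Since $\varphi$ and $s_d$ are homomorphisms, so is $\varphi\circ s_d$. It is a splitting of $p$ because $p\circ\varphi=p$. By the characterization recalled in the preliminaries, a function $d'$ for which $g\mapsto(d'(g),g)$ is a splitting homomorphism must be a derivation. Hence $\alpha\circ d+\beta$ is a derivation without any direct calculation. As a sanity check one can also expand $d'(gh)$ directly. Doing so, the $\beta$-term has to be moved past $g\cdot\alpha(d(h))$, and this is exactly where the compatibility relation is used. We will avoid this noncommutative bookkeeping by relying on the homomorphism argument above.

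Finally, Lemma \ref{lem:tc(d)=tc(d')} applied to the pair $d$, $d'$ and the automorphism $\varphi$ gives $\tc(\alpha\circ d+\beta)=\tc(d)$.

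The only delicate point is the order of summands in the noncommutative group $K$. The expression must be $\alpha(d(g))+\beta(g)$, with $\alpha\circ d$ first, as dictated by the product formula $(\alpha(k),1)\cdot(\beta(g),g)$. Everything else reduces directly to the two preceding lemmas.
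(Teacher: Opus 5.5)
Your proposal is correct and follows the paper's proof exactly. Lemma~\ref{lem:phi=alpha+beta} turns $(\alpha,\beta)$ into an automorphism $\varphi$ fixing $G$; the identity $\varphi\circ s_d=s_{\alpha\circ d+\beta}$ shows that $\alpha\circ d+\beta$ is a derivation; and Lemma~\ref{lem:tc(d)=tc(d')} then gives $\tc(\alpha\circ d+\beta)=\tc(d)$. Your reading of the hypothesis as ``$\beta$ is a derivation'' is the right fix for the typo, and your argument that $\varphi\circ s_d$ is a splitting homomorphism spells out what the paper leaves implicit in ``This shows''.
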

\begin{proof}
By Lemma \ref{lem:phi=alpha+beta}  the formula $\varphi(k,g):=(\alpha(k)+\beta(g),g)$ defines 
an automorphism of $K\rtimes G$ and we have
$$\varphi(s_d(g))=\varphi(d(g),g)=(\alpha(d(g))+\beta(g),g).$$
This shows that $\alpha\circ d+\beta$ is also a derivation, while Lemma \ref{lem:tc(d)=tc(d')} implies that 
$\tc(\alpha\circ d+\beta)=\tc(d)$.
\end{proof}

We have two main types of automorphisms that satisfy the assumptions of the above theorem, which
we call \emph{conjugate} and \emph{central}.

\begin{example}\label{ex:conjugate}
Let $a\in K$ and define 
$$\alpha(k):=a+k-a\ \ \text{and}\ \ \beta(g)=a-g\cdot a.$$ 
It is easy to check that 
$\alpha$ and $\beta$ satisfy the assumptions of Lemma \ref{lem:phi=alpha+beta}. 
By Theorem \ref{thm:tc(d)=tc(ad+b)}, for every derivation $d\colon G\to K$ the formula
$d'(g)=a+d(g)-g\cdot a$ also defines a derivation, and $\tc(d)=\tc(d').$

This result has a very nice formulation in terms of $K$-torsors. We refer the reader to 
\cite[Sec. I.2]{NSW} for relevant definitions and results. Every derivation $d\colon G\to K$
determines a $K$-torsor $X_d$ in the category of $G$-sets, and two $K$-torsors $X_d$ and 
$X_{d'}$ are isomorphic precisely when $d'(g)=a+d(g)-g\cdot a$ for some $a\in K$.
Thus we may state the following consequence of Theorem \ref{thm:tc(d)=tc(ad+b)}:

\begin{corollary}\label{cor:torsors}
If $d$ and $d'$ correspond to isomorphic torsors 
(i.e., elements of $H^1(G;K)$), then $\tc(d)=\tc(d')$.    
\end{corollary}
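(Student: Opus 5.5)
The plan is to reduce Corollary \ref{cor:torsors} to Example \ref{ex:conjugate}, and through it to Theorem \ref{thm:tc(d)=tc(ad+b)}. First I will make precise what it means for $d$ and $d'$ to correspond to isomorphic torsors, following \cite[Sec. I.2]{NSW}. A derivation $d$ determines the torsor $X_d$: this is the set $K$ with right $K$-action by translation and the twisted $G$-action $g\ast x := d(g)+g\cdot x$. Let $\psi\colon X_d\to X_{d'}$ be an isomorphism of $K$-torsors in $G$-sets. Because $\psi$ commutes with the $K$-action, it is determined by $b:=\psi(0)$, and has the form $\psi(x)=b+x$. The $G$-equivariance condition $\psi(g\ast 0)=g\ast'\psi(0)$ then gives
$$b+d(g)=d'(g)+g\cdot b .$$
Setting $a:=b$, this is $d'(g)=a+d(g)-g\cdot a$, which is exactly the relation stated in Example \ref{ex:conjugate}. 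So isomorphic torsors are precisely conjugate derivations.

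Next I will verify the hypotheses of Theorem \ref{thm:tc(d)=tc(ad+b)} for the pair $\alpha(k)=a+k-a$ and $\beta(g)=a-g\cdot a$. This amounts to the identity
$$a+g\cdot k-a+a-g\cdot a=a-g\cdot a+g\cdot a+g\cdot k-g\cdot a .$$
Both sides reduce to $a+g\cdot k-g\cdot a$, since $g$ acts by automorphisms and so $g\cdot(a+k-a)=g\cdot a+g\cdot k-g\cdot a$. The map $\alpha$ is clearly an automorphism, being an inner automorphism. It also has to be checked that $\beta$ is a derivation. This holds because $\beta(gh)=a-gh\cdot a$ and $\beta(g)+g\cdot\beta(h)=a-g\cdot a+g\cdot a-gh\cdot a$, which agree.

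Finally, I compute $\alpha\circ d+\beta$:
$$(\alpha\circ d+\beta)(g)=a+d(g)-a+a-g\cdot a=a+d(g)-g\cdot a=d'(g).$$
Theorem \ref{thm:tc(d)=tc(ad+b)} then gives $\tc(d')=\tc(d)$. The same conclusion can be seen more directly: the automorphism $\varphi(k,g)=(\alpha(k)+\beta(g),g)$ is conjugation by $(a,1)$ in $K\rtimes G$, and it carries $s_d$ to $s_{d'}$, so Lemma \ref{lem:tc(d)=tc(d')} applies.

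I expect the main obstacle to be the first step: matching the torsor conventions of \cite{NSW} with the additive, non-commutative conventions used here. In particular, care is needed over left versus right actions and over the sign convention in the cocycle, and for non-abelian $K$ it must be checked that the relation really reads $a+d(g)-g\cdot a$ and not a variant with the terms reordered. If the conventions turn out to produce $d'(g)=-a+d(g)+g\cdot a$ instead, I would replace $a$ by $-a$, and the argument goes through unchanged. The rest of the proof is routine verification.
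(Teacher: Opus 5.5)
Your proposal is correct and follows the paper's route. You translate the torsor isomorphism into the relation $d'(g)=a+d(g)-g\cdot a$, a step the paper simply cites from \cite{NSW}, and then apply Theorem \ref{thm:tc(d)=tc(ad+b)} with $\alpha(k)=a+k-a$ and $\beta(g)=a-g\cdot a$, exactly as in Example \ref{ex:conjugate}. You also supply the verifications the paper leaves as ``easy to check''. Your remark that the resulting automorphism is conjugation by $(a,1)$, so that Lemma \ref{lem:tc(d)=tc(d')} applies directly, is correct.
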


If $K$ is abelian, thus a $G$-module, then isomorphism classes of $K$-torsors form a group, which can be
naturally identified with $H^1(G;K)$, the first cohomology of the group $G$ with coefficients in $K$. 
Thus torsors represent a natural way to extend group cohomology to non-abelian coefficients, at least
in low dimensions. We will exploit this point of view in the next example.\qed
\end{example}

\begin{example}\label{ex:central}
Let $b\in Z(G)$  and let $c\colon G\to K$ be a derivation with $c(G)\subseteq Z(K)$. Define
$$\alpha(k):=b\cdot k\ \ \text{and}\ \ \beta(g):=c(g),$$
and check that the assumptions of Lemma \ref{lem:phi=alpha+beta} are satisfied. Then for every
derivation $d\colon G\to K$ the formula $d'(g)=b\cdot d(g)+c(g)$ also defines a derivation, 
and $\tc(d)=\tc(d').$ In other words, $\tc$ is invariant with respect to dilation and translation
of derivations by central elements. The following special case is of particular interest:

\begin{corollary}\label{cor:K abelian}
Assume $K$ is abelian. Then for any two derivations $d,d'\colon G\to K$ we have $\tc(d)=\tc(d')$. 
In other words, if $K$ is abelian, then $\tc$ is an invariant of action alone.
\end{corollary}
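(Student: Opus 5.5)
The plan is to apply Theorem \ref{thm:tc(d)=tc(ad+b)} in the simplest possible form: take $\alpha=1_K$ and $\beta$ equal to a suitable derivation. Given two derivations $d,d'\colon G\to K$, set $\beta:=d'-d$, that is, $\beta(g)=d'(g)-d(g)$. The goal is then to check that this choice satisfies the hypotheses of the theorem, and that $\alpha\circ d+\beta=d'$.

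First, since $K$ is abelian, $\beta$ is itself a derivation. Indeed,
\[
\beta(gh)=d'(g)+g\cdot d'(h)-d(g)-g\cdot d(h)=\beta(g)+g\cdot\beta(h),
\]
where the reordering of terms uses commutativity of $K$ and the fact that $g$ acts by automorphisms. Second, with $\alpha=1_K$ the compatibility condition of Lemma \ref{lem:phi=alpha+beta} reads
\[
g\cdot k+\beta(g)=\beta(g)+g\cdot k,
\]
which holds trivially because $K$ is abelian. This is also the special case $b=1$, $c=\beta$ of the central construction just described, since $Z(K)=K$.

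Theorem \ref{thm:tc(d)=tc(ad+b)} then gives $\tc(d)=\tc(\alpha\circ d+\beta)=\tc(d+d'-d)=\tc(d')$, using commutativity once more to simplify $d+d'-d$ to $d'$. Since any two derivations relative to a fixed action have the same complexity, $\tc$ depends only on the action.

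No step here is a genuine obstacle. The only point needing care is to use commutativity of $K$ both when showing that the difference of two derivations is a derivation and when simplifying $d+(d'-d)$ to $d'$.
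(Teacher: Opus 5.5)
Your proof is correct and matches the paper's argument: the corollary is the special case of the central construction in Example~\ref{ex:central} with $b=1$ and $c=d'-d$. Since $K$ is abelian, $c$ is a derivation with values in $Z(K)=K$, so Theorem~\ref{thm:tc(d)=tc(ad+b)} gives $\tc(d)=\tc(d+(d'-d))=\tc(d')$. Your only addition is to write out the uses of commutativity (that $d'-d$ is a derivation, and that the condition of Lemma~\ref{lem:phi=alpha+beta} holds for $\alpha=1_K$), which the paper leaves implicit.
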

\qed
\end{example} 

We may summarize the conclusions of the above examples by considering 
the exact sequence of cohomology groups and pointed sets of $G$ associated to the short 
exact sequence of coefficients 
$Z(K)\stackrel{i}{\rightarrowtail} K\stackrel{q}{\twoheadrightarrow} \mathrm{Inn}(K)$
(see \cite[Ch. III]{Giraud1971})
$$
\xymatrix{
H^1(G;Z(K))\ar[r]^-{i^*} & H^1(G;K) \ar[r]^-{q^*} & H^1(G;\mathrm{Inn}(K))\ar[r]^\delta & H^2(G;Z(K))
}$$
The map $\delta$ is often called \emph{Brauer obstruction}.
By Example \ref{ex:conjugate} the invariant $\tc$ is defined on elements of $H^1(G;K)$. Furthermore, by
Example \ref{ex:central} it is constant on the image of $i^*$, so $\tc$ is, in fact, defined on 
$\im(q^*)=\Ker\delta$. Finally, the action of $G$ on $\mathrm{Inn}(K)$ induces and action of 
$Z(G)\le G$ on $\mathrm{Inn}(K)$, and therefore on $H^1(G;\mathrm{Inn}(K))$. We have seen in Example 
\ref{ex:central} that $\tc$ is constant on the orbits of that action. We have thus proved the following 
result.

\begin{corollary}
Let $d,d'\colon G\to K$ be derivations viewed as elements of $H^1(G;K)$, and let 
$\delta\colon H^1(G;\mathrm{Inn}(K))\to H^2(G;Z(K))$ be the Brauer obstruction map. 
If $d$ and $d'$ map to the same class in $\Ker\delta/Z(G)$, then $\tc(d)=\tc(d')$.
\end{corollary}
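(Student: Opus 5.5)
The plan is to combine the two invariance results already established: Example \ref{ex:conjugate} (conjugation by $a\in K$) and Example \ref{ex:central} (dilation by $b\in Z(G)$ and translation by a derivation $c$ with values in $Z(K)$). Both are instances of Theorem \ref{thm:tc(d)=tc(ad+b)}. Since $\tc$ is invariant under the operations that generate the relevant equivalence relation, it is enough to show that the fibres of $H^1(G;K)\to \Ker\delta/Z(G)$ are exactly the orbits of the group generated by these operations.

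First, conjugation. By Corollary \ref{cor:torsors}, $\tc$ is constant on cohomology classes in $H^1(G;K)$, so it descends to a function on $H^1(G;K)$.

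Second, the fibres of $q^*$. Here I will use the standard exactness property of the nonabelian sequence for a central extension $Z(K)\rightarrowtail K\twoheadrightarrow \mathrm{Inn}(K)$: the group $H^1(G;Z(K))$ acts on $H^1(G;K)$ by $[c]\cdot[d]=[c+d]$, and two classes have the same image under $q^*$ precisely when they lie in the same orbit of this action (see \cite[Ch. III]{Giraud1971}). Some care is needed because $Z(K)$ is $G$-invariant but the action of $G$ on $K$ need not be inner. Since $c$ takes central values, $c+d$ is again a derivation, and this is exactly the case $b=1$ of Example \ref{ex:central}. So if $q^*[d]=q^*[d']$, we may write $d'$ as cohomologous to $c+d$, and then $\tc(d')=\tc(c+d)=\tc(d)$. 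Hence $\tc$ factors through $\im(q^*)=\Ker\delta$.

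Third, the $Z(G)$-action. For $b\in Z(G)$ the map $k\mapsto b\cdot k$ is an automorphism of $K$ commuting with the $G$-action, because $b$ is central in $G$. It therefore induces the action on $H^1(G;K)$ given by $[d]\mapsto[b\cdot d]$, and this action is compatible with $q^*$ (it acts on $\mathrm{Inn}(K)$ by the induced map). Example \ref{ex:central} with $c=0$ gives $\tc(b\cdot d)=\tc(d)$. So if $q^*[d']=b\cdot q^*[d]=q^*[b\cdot d]$, the previous step yields $\tc(d')=\tc(b\cdot d)=\tc(d)$.

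I expect the main obstacle to be the second step, namely the precise exactness statement of the nonabelian sequence in the twisted setting. If the general result is not directly available, I would verify it by hand. Suppose $q\circ d'$ and $q\circ d$ are cohomologous in $\mathrm{Inn}(K)$. After replacing $d'$ by a conjugate derivation (lifting the element of $\mathrm{Inn}(K)$ to some $a\in K$), we may assume $q\circ d'=q\circ d$. Then $c:=d'-d$ takes values in $Z(K)$, and a direct computation using centrality shows that $c$ is a derivation. This gives $d'=c+d$, as required.
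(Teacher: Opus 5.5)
Your proposal is correct and follows the same route as the paper. Both combine invariance under conjugation by $a\in K$ (Example \ref{ex:conjugate}) with invariance under translation by $Z(K)$-valued derivations and under dilation by $b\in Z(G)$ (Example \ref{ex:central}), assembled along the sequence $Z(K)\rightarrowtail K\twoheadrightarrow \mathrm{Inn}(K)$. Your second step is more careful than the paper's, which only notes that $\tc$ is constant on $\im(i^*)$. Your direct check that derivations with cohomologous images in $\mathrm{Inn}(K)$ differ, after conjugation, by a $Z(K)$-valued derivation is exactly what justifies that $\tc$ factors through $\im(q^*)=\Ker\delta$.
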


\subsection{Compositions of splittings} In this section, we describe estimates for the complexity of
iterated semi-direct products. We begin with a general result on sectional category. 

\begin{proposition}\label{prop:secat of composition}
Let $f\colon X\to Y$ and $g\colon Y\to Z$ be any maps. Then 
$$\secat(g\circ f)\ge\secat(g).$$
Furthermore, if $g$ admits a retraction, that is, a map $r\colon Z\to Y$, such that 
$r\circ g\simeq 1_Y$, then 
$$\secat(g\circ f)\ge\secat(f).$$
On the other hand, if $g$ admits a section, that is, a map $s\colon Z\to Y$, such that $g\circ s\simeq 1_Z$, then 
$$\secat(g\circ f)\le\secat(f).$$
\end{proposition}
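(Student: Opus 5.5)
I would work throughout with the homotopy-theoretic description of sectional category. For a map $h\colon A\to B$, a subset $U\subseteq B$ is $h$-sectionable (for the fibrational replacement) exactly when there is a map $\sigma\colon U\to A$ with $h\circ\sigma\simeq\iota_U$, the inclusion of $U$ in $B$. Because this description is homotopy invariant, I never need to pass explicitly to fibrations. I can also use freely that if $h\circ\sigma\simeq\iota_U$ and $k\simeq k'$, then compositions respect these homotopies. Each of the three inequalities then comes from turning an open cover that is sectionable for one map into one that is sectionable for the other.

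\textbf{First inequality.} Let $U_0,\ldots,U_n$ be an open cover of $Z$ with homotopy sections $\sigma_i\colon U_i\to X$ of $g\circ f$, so that $g\circ f\circ\sigma_i\simeq\iota_{U_i}$. Then $f\circ\sigma_i\colon U_i\to Y$ is a homotopy section of $g$ over $U_i$. Hence $\secat(g)\le\secat(g\circ f)$.

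\textbf{Retraction case.} Start again with a cover $U_0,\ldots,U_n$ of $Z$ and homotopy sections $\sigma_i$ of $g\circ f$. Pull it back to $Y$ by setting $V_i:=g^{-1}(U_i)$. These sets are open and cover $Y$. On $V_i$ define $\tau_i:=\sigma_i\circ g|_{V_i}\colon V_i\to X$. Then
$$f\circ\tau_i=r\circ(g\circ f\circ\sigma_i)\circ g|_{V_i}\simeq r\circ g|_{V_i}\simeq \iota_{V_i},$$
where the first step inserts $r\circ g\simeq 1_Y$ in front of $f$. So $\tau_i$ is a homotopy section of $f$ over $V_i$.

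The one point needing care is the first equality above. Written out, $f\circ\tau_i = f\circ\sigma_i\circ g$, and I rewrite this as $(r\circ g)\circ f\circ\sigma_i\circ g$ up to homotopy. I then use $g\circ f\circ\sigma_i\simeq\iota_{U_i}$ to get $r\circ\iota_{U_i}\circ g|_{V_i}=r\circ g|_{V_i}\simeq\iota_{V_i}$. All the homotopies are compositions of given ones restricted to subsets, so they are legitimate. This gives $\secat(f)\le\secat(g\circ f)$.

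\textbf{Section case.} Let $V_0,\ldots,V_n$ be an open cover of $Y$ with homotopy sections $\tau_i\colon V_i\to X$ of $f$. Set $U_i:=s^{-1}(V_i)$; these are open and cover $Z$. Define $\sigma_i:=\tau_i\circ s|_{U_i}$. Then
$$g\circ f\circ\sigma_i\simeq g\circ s|_{U_i}\simeq \iota_{U_i}.$$
Hence $\secat(g\circ f)\le\secat(f)$.

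I expect the main obstacle to be purely technical. It is making sure the homotopy-section characterization of sectional category for arbitrary maps, via fibrational replacement, is valid on open subsets, which holds for reasonable spaces. Once that is granted, the three estimates are routine manipulations as above.
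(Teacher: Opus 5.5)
Your proof is correct and is essentially the paper's argument: in the retraction case you pull back sectionable sets of $g\circ f$ along $g$, in the section case you pull back sectionable sets of $f$ along $s$, and you use the same chains of homotopies; for the first inequality, which the paper leaves to the reader, you supply the one-line argument that $f\circ\sigma_i$ is a homotopy section of $g$. The only blemish is that the first relation in your retraction-case display should read $\simeq$ rather than $=$, as you yourself note in the paragraph that follows it.
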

\begin{proof} The inequality $\secat(g \circ f)\geq\secat(g)$ is immediate and is left to the reader. 

Toward the second estimate, assume that an open subset $U\subseteq Z$ admits a homotopy
section $\sigma\colon U\to X$, such that $(g\circ f)\circ \sigma\simeq i_U\colon U\hookrightarrow Z$.
Let $V:=g^{-1}(U)\subseteq Y$ and let $i_V\colon V\to Y$ be the inclusion. 
Then we may use the relation $r\circ g\simeq 1_Y$ to compute 
$$f\circ (\sigma\circ g)\simeq r\circ (g\circ f\circ \sigma)\circ g \simeq r\circ i_U\circ g
=(r\circ g)\circ i_V\simeq i_V$$
and thus conclude that $\sigma\circ g$ is a section of $f$ over $V$. Since the preimages of 
sectionable subsets of $Z$ cover $Y$, we conclude that $\secat(g\circ f)\ge\secat(f)$.

For the third estimate, assume that an open subset $U\subseteq Y$ admits a homotopy section 
$\sigma\colon U\to X$, such that $f\circ \sigma\simeq i_U\colon U\hookrightarrow Y$.

Let $V:=s^{-1}(U)\subseteq Z$ and let $i_V\colon V\to Z$ be the inclusion. 
Then we may use the relation $g\circ s\simeq 1_Z$ to compute 
$$(g\circ f)\circ (\sigma\circ s)\simeq g\circ i_U\circ s=g\circ s\circ i_V\simeq i_V$$
and thus conclude that $\sigma\circ s$ is a section of $g\circ f$ over $V$. Since the preimages of 
sectionable subsets of $Y$ cover $Z$, we conclude that $\secat(f)\ge \secat(g\circ f)$.
\end{proof}

Returning to semi-direct products, consider groups $G,H,K$ with an action of $G$ on $H$ and an 
action of $H\rtimes G$ on $K$, so that 
we can form the iterated semi-direct product $K\rtimes(H\rtimes G)$. Furthermore, we have projections
$$p\colon K\rtimes(H\rtimes G)\to H\rtimes G\ \ \ \text{and}\ \ \ q\colon H\rtimes G\to G.$$
Note that in $K\rtimes(H\rtimes G)$ the subgroup $G$ acts both on $H$ and $K$, resulting in
an action of $G$ on $K\rtimes H$. 
A direct computation shows that the kernel of $q\circ p\colon K\rtimes(H\rtimes G)\to G$ is 
isomorphic to $K\rtimes H$, so that $K\rtimes(H\rtimes G)\cong (K\rtimes H)\rtimes G$. (Here, we do not
claim that the semi-direct
product operation is associative. In fact, given an action of $H$ on $K$
and an action of $G$ on $K\rtimes H$, one can form $(K\rtimes H)\rtimes G$, but this does not imply
that $G$ acts on $K$ as in the case of $K\rtimes(H\rtimes G)$.)

Let us now pick a derivation $d\colon G\to H$ and a derivation $d'\colon H\rtimes G\to K$. We may combine 
them to obtain a derivation 
$$d'\ast d:=(d'\circ s_d,d)\colon G\to K\rtimes L$$
so that $s_{d'\ast d}=s_{d'}\circ s_d$. 
If we apply the classifying space functor to the diagram 
$$\xymatrix{
K\rtimes(H\rtimes G) \ar@<-.5ex>[r]_-p  & H\rtimes G \ar@<-.5ex>[r]_-q 
\ar@<-.5ex>[l]_-{s_{d'}} &  G \ar@<-.5ex>[l]_-{s_d}
}$$
and use Theorem \ref{thm:tc=secat} and Proposition \ref{prop:secat of composition}, we obtain
the following result:

\begin{theorem}
Let $d\colon G\to H$ be a derivation relative to some action of $G$ on $H$, and let 
$d'\colon H\rtimes G\to K$ be a derivation relative to some action of 
$H\rtimes G$ on $K$. Then the complexity of the derivation $d'\ast d=(d'\circ s_d,d)$
satisfies the inequality
    $$\tc(d'\ast d)\ge\max\{\tc(d),\tc(d')\}$$
\end{theorem}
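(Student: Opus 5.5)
The plan is to translate all three complexities into sectional categories of maps of classifying spaces, using Theorem \ref{thm:tc=secat}, and then apply Proposition \ref{prop:secat of composition} to the composite $B(s_{d'})\circ B(s_d)$.

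By Theorem \ref{thm:tc=secat}(2), $\tc(d)=\secat(\pi_d)$, where $\pi_d$ is the covering of $B(H\rtimes G)$ corresponding to the subgroup $s_d(G)$. Up to homotopy this covering is just the map $B(s_d)\colon BG\to B(H\rtimes G)$: the covering space is a $K(G,1)$ and its projection induces $s_d$ on $\pi_1$. Since sectional category of an arbitrary map is defined via its fibrational replacement, it is invariant under this replacement, so
$$\tc(d)=\secat(B(s_d)).$$
The same argument gives $\tc(d')=\secat(B(s_{d'}))$, with $B(s_{d'})\colon B(H\rtimes G)\to B(K\rtimes(H\rtimes G))$.

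For the composite derivation, first check that $d'\ast d$ is a derivation into $K\rtimes H$. This holds because $s_{d'}\circ s_d\colon G\to K\rtimes(H\rtimes G)$ is a homomorphic splitting of $q\circ p$. Under the identification $K\rtimes(H\rtimes G)\cong (K\rtimes H)\rtimes G$, that splitting is exactly $s_{d'\ast d}$. Hence
$$\tc(d'\ast d)=\secat\big(B(s_{d'})\circ B(s_d)\big).$$

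Now apply Proposition \ref{prop:secat of composition} with $f=B(s_d)$ and $g=B(s_{d'})$.
\begin{enumerate}
\item The first inequality there gives $\secat(g\circ f)\ge\secat(g)$, that is, $\tc(d'\ast d)\ge\tc(d')$.
\item The map $g=B(s_{d'})$ has a retraction $r=B(p)$, because $p\circ s_{d'}=1_{H\rtimes G}$ and hence $B(p)\circ B(s_{d'})\simeq 1$ by functoriality. The second inequality then gives $\tc(d'\ast d)\ge\secat(f)=\tc(d)$.
\end{enumerate}
Together these yield $\tc(d'\ast d)\ge\max\{\tc(d),\tc(d')\}$.

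The main point needing care is the identification of $\secat(\pi_d)$ with $\secat(B(s_d))$, and compatibility with the homotopy-section formulation used in the proof of Proposition \ref{prop:secat of composition}. I would handle this as follows. The lift $BG\to\widetilde{B(H\rtimes G)}_{\im(s_d)}$ is a homotopy equivalence over $B(H\rtimes G)$, since it is a $\pi_1$-isomorphism between aspherical spaces. Therefore homotopy sections of the two maps correspond over any open set. A secondary check is the isomorphism $K\rtimes(H\rtimes G)\cong(K\rtimes H)\rtimes G$ carrying $s_{d'}\circ s_d$ to $s_{d'\ast d}$; this is the direct computation already indicated in the text.
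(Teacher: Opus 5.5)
Your proposal is correct and follows the paper's own route. The paper's proof is a single sentence: apply $B$ to the diagram of sections and projections, identify each complexity with a sectional category via Theorem~\ref{thm:tc=secat}, and use both inequalities of Proposition~\ref{prop:secat of composition}, with $B(p)$ serving as a retraction of $B(s_{d'})$. You spell out the details the paper leaves implicit, namely the identification of $s_{d'}\circ s_d$ with $s_{d'\ast d}$ under $K\rtimes(H\rtimes G)\cong(K\rtimes H)\rtimes G$ and the replacement of the covering $\pi_d$ by $B(s_d)$.
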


At this point, we would like to mention a question whether there is an upper estimate of
$\tc(d'\ast d)$ in terms of $\tc(d)$ and $\tc(d')$, perhaps by their sum. 
In terms of our notation in Proposition \ref{prop:secat of composition}, 
Arkowitz and Strom \cite[Theorem 5.4]{ArkowitzStrom2004} proved that 
$\secat(s\circ s')\le \cat_{FH}(s)+\secat(s')$, where $\cat_{FH}(s)$ denotes a version
of category of a map introduced by Fadell and Husseini (the latter can be also expressed
in terms of the relative category of the pair $(E,A)$, see \cite[Section 7.2]{CLOT}). 
Unfortunately, we do not know the precise relation between $\cat_{FH}(\pi_{d'})$ and
$\tc(d')$. 





\section{Examples and computations}\label{sec:Examples and computations}

In this section we collect a series of computations that cover some important semi-direct
decompositions of torsion-free groups. In order to avoid repetition we selected examples 
that illustrate 
different approaches and general methods developed in Section \ref{sec:Topological complexity 
of a derivation}. We first consider fundamental groups of mapping tori.

\subsection{Mapping tori}\label{ssec:mapping tori}

Let $f\colon X\to X$ be a self-map of a topological space $X$.  The \emph{mapping torus} of $f$ 
is the quotient space 
$$T_f=(X\times[0,1])/(x,1)\sim (f(x),0)$$
Thus, $T_f$ is obtained from the cylinder $X\times [0,1]$ by gluing the
top copy of $X$ to the bottom copy using the map $f$. The mapping torus comes 
with the obvious projection map $p\colon T_f\to S^1$. If $f$ 
is a homeomorphism, then $p\colon T_f\to S^1$ is a fiber bundle with fibre $X$.

Assume that $X$ is path-connected and that $f$ is a homotopy equivalence, so that 
the induced homomorphism $f_\sharp\colon \pi_1(X,x_0)\to \pi_1(X,f(x_0))$ is an isomorphism.
In order to obtain an automorphism of $\pi_1(X,x_0)$ choose a path $\alpha\colon (I,0,1)\to (X,f(x_0),x_0)$,
denote by $\widetilde{\alpha}\colon\pi_1(X,f(x_0))\to \pi_1(X,x_0)$ the change-of-basepoint isomorphism
along $\alpha$, and then define $\varphi:=\widetilde\alpha\circ f_\sharp$. With this notation, the fundamental
group of $T_f$ is a semi-direct product of the form
$$\pi_1(T_f;(c_0,0)\cong \pi_1(X,x_0)\rtimes_{\varphi}\mathbb Z.$$
Taking a different path from $f(x_0)$ to $x_0$, the automorphism $\varphi$ changes by an inner
automorphism of $\pi_1(X,x:0)$, so it only affects the representation of the group as a semi-direct 
product. Moreover, the action of an inner automorphism on a $d$ does not change the corresponding
class in $H^1(\ZZ; \pi_1(X,x_0)$, so by Corollary \ref{cor:torsors} it also does not change the 
value of $\tc(d)$. From now on, we will drop the basepoints from our notation for the fundamental
group. 

By Theorem \ref{thm:cd estimates for tc}, for every derivation $d\colon\ZZ\to\pi_1(X)$
$$\cd(\pi_1(X)\le \tc(d)\le\cd(\pi_1(X))+1,$$
so we only need to decide between two possible values.

\begin{example} (The Klein bottle) \label{ex:Klein bottle}
Let us denote by $K$ the Klein bottle which can be  obtained 
as the mapping torus of the 
reflection map on $S^1$. Its fundamental group is therefore a semi-direct product, determined by 
the action $\varphi\colon\ZZ\to \Aut(\ZZ)=\{\pm 1\}$, where $\varphi(1)$ is the multiplication by $-1$.
The standard presentation of $\pi_1(X)$ is
$$\pi_1(K)=\ZZ\rtimes_\varphi \ZZ=\bigl\langle\, x,t \;\bigm|\; txt^{-1}=x^{-1} \,\bigr\rangle$$    

\begin{proposition}
In the Klein bottle group $\tc(d)=2$ for every derivation $d$.   
\end{proposition}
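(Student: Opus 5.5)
Since $K=\ZZ$ is abelian, Corollary \ref{cor:K abelian} shows that $\tc(d)$ does not depend on the derivation $d$. It therefore suffices to compute $\tc(d_0)$ for the trivial derivation $d_0$, which corresponds to the standard inclusion $s_0\colon\ZZ\to\ZZ\rtimes_\varphi\ZZ$, $t\mapsto(0,t)$. Theorem \ref{thm:cd estimates for tc} gives $1=\cd(\ZZ)\le\tc(d_0)\le\cd(\ZZ)+\cd(\ZZ)=2$, so the task is to rule out the value $1$.

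The stabilizer method of Subsection \ref{subs:Group-theoretic lower bounds} does not help here. The generator $t$ acts by $x\mapsto -x$, so $t^2$ acts trivially, and the stabilizer of every $k\ne0$ is $2\ZZ$, which has $\cd=1$. Hence $\kappa=1$ and the bound from \cite{EBFMO} only gives $\secat\ge1$. I would therefore use the cohomological bound of Theorem \ref{thm:coho estimate for tc}, with coefficients $\ZZ/2$ (a trivial module). We need two classes in $\Ker s_0^*\subseteq H^1(\pi_1(K);\ZZ/2)$ whose cup product is nonzero.

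Recall $H^*(K;\ZZ/2)$ has generators $a,b\in H^1$, dual to $x,t$ under $H^1=\mathrm{Hom}(\pi_1,\ZZ/2)$, with $a^2=0$ (or $a^2=ab$ depending on basis), $b^2=0$ and $ab\ne0$. More precisely, take $b$ to be the class pulled back from the base circle, so $b(t)=1$ and $b(x)=0$, and take $a$ with $a(x)=1$ and $a(t)=0$. The map $s_0^*$ is restriction to $\langle t\rangle$, so $a\in\Ker s_0^*$ and $b\notin\Ker s_0^*$. A single class does not suffice, so I would examine $a\cup a$. For the Klein bottle with mod $2$ coefficients, $w_1$ equals the orientation character, which is the class $b$ (it is nonzero on $t$). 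Wu's formula gives $a^2=v_1a=w_1a=ab\ne0$, since $a$ is not a multiple of $b$ and Poincaré duality pairs $a$ nontrivially with some class. I would double-check this by the standard computation: $H^1(K;\ZZ/2)$ has basis $\{a,b\}$, $b^2=0$, $ab$ is the generator, and $a^2=ab$. So $a\cup a\ne0$ with $a\in\Ker s_0^*$, hence $\cl(\Ker s_0^*)\ge2$ and $\tc(d_0)\ge2$.

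The main obstacle is verifying $a^2\ne0$ correctly, since the choice of basis matters: the class that squares to zero is the one pulled back from the base, and that class is exactly the one not in the kernel. If the Wu-formula argument looks shaky, I would fall back on a direct simplicial computation on the standard square model of $K$. An alternative route is to use twisted integer coefficients $\ZZ^-$ and a product $H^1(\ZZ)\times H^1(\ZZ^-)\to H^2(\ZZ^-)\cong\ZZ$, where the classes must again vanish on $t$. The mod $2$ approach is cleaner. Combining the two bounds gives $\tc(d)=2$ for every derivation $d$.
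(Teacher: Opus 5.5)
Your proof is correct and follows essentially the same route as the paper. You reduce to the trivial derivation via Corollary \ref{cor:K abelian}, use $1\le\tc(d_0)\le 2$, and apply Theorem \ref{thm:coho estimate for tc} with $\mathbb{F}_2$ coefficients, noting that the fibre class, which vanishes on $t$, squares to the nonzero top class. The paper writes $H^*(K;\mathbb{F}_2)=\mathbb{F}_2[a,b]/(a^2,b^2+ab)$ with $a$ and $b$ swapped relative to your labels and simply quotes this ring structure, whereas you justify $a^2=ab\ne 0$ via Wu's formula and Poincar\'e duality.
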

\begin{proof}
Since $\ZZ$ is abelian, the value of $\tc$ depends only on the action, and it is sufficient to
compute $\tc(d_0)$ for the trivial derivation $d_0$.
We know that $1\le\tc(d_0)\le 2$ and we will use Theorem \ref{thm:coho estimate for tc} to show that 
the correct value is 2. The mod-2 cohomology ring of the Klein bottle is well-known and is given as
$$ H^*(K;\mathbb{F}_2)\;=\;\mathbb{F}_2[a,b]\big/\bigl(a^2,\;b^2+ab\bigr),   \qquad |a|=|b|=1. $$

The trivial section associated to $d_0$ is $s_0\colon S^1\to K$, and it is easy to see that 
$s_0^\ast(a)$ is the generator of $H^1(S^1,\mathbb{F}_2)$ and $s_0^\ast(b)=0$. Thus
$\Ker(s_0^\ast)$ is the linear span of $b$ and  $ab$. It immediately follows that $b^2=ab\ne 0$, 
so by Theorem \ref{thm:coho estimate for tc}
$$\tc(d_0)\ge\cl(\Ker s_0^\ast)=2.$$
\end{proof}
\end{example}

The reader may compare the last result with the case of the two-dimensional torus, whose fundamental group 
is given by the trivial 
action of $\ZZ$ on $\ZZ$. Then $\tc(d)=1$ as shown in Example \ref{ex:trivial derivation}.

\begin{example} ($F_2\times \ZZ$.) \label{ex:trivial F2}
Let us now consider the trivial action of $\ZZ$ on 
the free group $F_2$. Since the latter is not abelian, it is possible that the values of $\tc(d)$ 
depend on the choice of derivation. For the trivial derivation $d_0$ Example \ref{ex:trivial derivation}
gives $\tc(d_0)=\cd(F_2)=1$. Since the action is trivial, derivations correspond to homomorphisms
$d\colon \ZZ\to F_2$, so given a word $w\in F_2$, we denote by $d_w$ the derivation determined 
by $d_w(1)=w$.

\begin{proposition}\label{prop:ex free by Z}
$\tc(d_w)=2$ for every $w\ne 1$.
\end{proposition}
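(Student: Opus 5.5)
The upper bound $\tc(d_w)\le \cd(F_2)+\cd(\ZZ)=2$ follows from Theorem \ref{thm:cd estimates for tc}, so I only need to prove $\tc(d_w)\ge 2$. My plan is to use the covering-space description of Theorem \ref{thm:tc=secat} together with Lemma \ref{lem:secat of inclusion}. A cup-length argument via Theorem \ref{thm:coho estimate for tc} only handles some $w$. Writing $H^*(BF_2\times S^1)$ with generators $x_1,x_2,t$ in degree $1$, we have $s_{d_w}^*(x_i)=n_i u$ and $s_{d_w}^*(t)=u$, where $n_i$ are the exponent sums of $w$. The product $(x_1-n_1t)(x_2-n_2t)$ of kernel elements is nonzero only when $(n_1,n_2)\ne(0,0)$. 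So this argument fails for $w\in[F_2,F_2]$, and I will instead give a uniform group-theoretic argument.

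Since the action is trivial, $F_2\rtimes\ZZ=F_2\times\ZZ$ and $s_{d_w}(\ZZ)=\{(w^n,n)\mid n\in\ZZ\}$. By Theorem \ref{thm:tc=secat}(2), $\tc(d_w)=\secat(\pi_{d_w})$, where $\pi_{d_w}$ is the covering of $B(F_2\times\ZZ)$ corresponding to this subgroup. Its conjugates are
$$(k,m)\,s_{d_w}(\ZZ)\,(k,m)^{-1}=\{(kw^nk^{-1},n)\mid n\in\ZZ\}.$$
I will choose a nontrivial element $u\in F_2$ that is not conjugate to $w$. For example, take $u=a$ unless $w$ is conjugate to $a$, in which case take $u=b$. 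Set $H:=\{(u^m,n)\mid m,n\in\ZZ\}\cong\ZZ^2$, so $\cd(H)=2$.

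An element of $H$ lies in a conjugate of $s_{d_w}(\ZZ)$ exactly when $m=n$ and $u^n=kw^nk^{-1}$ for some $k\in F_2$. For $n=0$ this is the identity. For $n\ne 0$ it says that $u^n$ and $w^n$ are conjugate in $F_2$. Hence the hypothesis of Lemma \ref{lem:secat of inclusion} holds as soon as the following is true:
\[
u^n\sim w^n \text{ in } F_2 \text{ with } n\neq 0 \implies u\sim w .
\]
Granting this, Lemma \ref{lem:secat of inclusion} gives $\secat(\pi_{d_w})\ge\cd(H)=2$, and therefore $\tc(d_w)=2$.

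The main obstacle is this root-uniqueness-up-to-conjugacy fact in free groups. I would prove it by passing to cyclically reduced forms. Write $u=gu'g^{-1}$ and $w=hw'h^{-1}$ with $u',w'$ cyclically reduced. Then $u^n=g(u')^ng^{-1}$ and $w^n=h(w')^nh^{-1}$, and $(u')^n$ and $(w')^n$ are again cyclically reduced. Two cyclically reduced words are conjugate iff one is a cyclic permutation of the other. A cyclic permutation of $(w')^n$ that equals $(u')^n$ has length $n|w'|$, which forces $|u'|=|w'|$. Comparing the first $|u'|$ letters then shows that $u'$ is a cyclic permutation of $w'$, so $u\sim w$. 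The case $n<0$ reduces to $n>0$ by taking inverses.
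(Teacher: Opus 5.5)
You follow the paper's route: the upper bound $\cd(F_2\times\ZZ)=2$, and the lower bound from Lemma~\ref{lem:secat of inclusion} with $H=\langle u\rangle\times\ZZ\cong\ZZ^2$. The reduction step, however, is wrong. An element $(u^m,n)\in H$ lies in the conjugate $\{(kw^{j}k^{-1},j)\mid j\in\ZZ\}$ if and only if $u^m=kw^nk^{-1}$. The exponents $m$ and $n$ are independent, and nothing forces $m=n$. So the hypothesis of the lemma really requires $u^m\not\sim w^n$ for all $m$ and all $n\neq 0$; that is, no nonzero power of $w$ may be conjugate into $\langle u\rangle$. Your choice of $u$ does not ensure this. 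Take $w=a^{-1}$ or $w=a^{2}$: $w$ is not conjugate to $a$ (compare images in the abelianization), so you take $u=a$. But then $s_{d_w}(\ZZ)=\{(a^{-n},n)\}$, resp.\ $\{(a^{2n},n)\}$, lies entirely inside $H$. The lemma does not apply, and the root-uniqueness statement you prove, although true, is not what is needed.

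The repair is to choose $u$ so that no nonzero power of $w$ is conjugate into $\langle u\rangle$. The paper does this with the exponent-sum homomorphisms $\epsilon_a,\epsilon_b\colon F_2\to\ZZ$: take $u=b$ if $\epsilon_b(w)=0$ and $u=a$ otherwise.
\begin{itemize}
\item If $\epsilon_b(w)=0$ and $kw^nk^{-1}=b^m$, applying $\epsilon_b$ gives $m=0$. Hence $w^n=1$, and $n=0$ because $F_2$ is torsion-free and $w\neq 1$.
\item If $\epsilon_b(w)\neq0$ and $kw^nk^{-1}=a^m$, applying $\epsilon_b$ gives $n\,\epsilon_b(w)=0$, so $n=0$ and then $m=0$.
\end{itemize}
This uses only the abelianization and makes the cyclic-reduction argument unnecessary. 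Alternatively, you can keep your framework but take $u=a$ unless $w$ is conjugate to a nonzero power of $a$, and $u=b$ in that case. You would then use that the centralizer of $a^m$, $m\neq0$, in $F_2$ is $\langle a\rangle$. Your preliminary observation that the cup-length bound fails for $w\in[F_2,F_2]$ is correct, and it explains why a group-theoretic lower bound is needed here.
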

\begin{proof}
We will base our proof on the lower estimate given in Lemma \ref{lem:secat of inclusion}.
Let $F_2$ be the free group with generators $x_1,x_2$ and let 
$\epsilon_1,\epsilon_2\colon F_2\to\ZZ$ be the homomorphisms that to each word 
$w\in F_2$ assigns the sum of exponents of $x_1$, and respectively $x_2$ in $w$.
Furthermore, let $G:=F_2\times\ZZ$, $K:=\im(s_{d_w})=\{(w^k,k)\in F_2\times\ZZ\}$ and 
$H:=\langle g\rangle \times \ZZ\le F_2\times \ZZ$, where 
$$g:=\left\{\begin{array}{ll}
x_1  & \epsilon_2(w)\ne 0 \\
x_2  & \epsilon_2(w)=0
\end{array}\right.$$
We claim that the groups $G,K,H$ satisfy the hypothesis of Lemma~\ref{lem:secat of inclusion}. 
Indeed, for elements $(u,l)\in G$ and $(w^k,k)\in K$ we have
$$(u,l)(w^k,k)(u,l)^{-1}=(uw^ku^{-1},k).$$
This is an element of $H$ if, and only if, $uw^ku^{-1}=g^l$ for some $l\in\ZZ$. 
We must consider two possibilities.

If $\epsilon_2(w)=0$, then $g=x_2$ and 
$0=\epsilon_2(uw^ku^{-1})=\epsilon_2(g^l)=l$ implies $uw^k u^{-1}=1$. In the free group
this is possible only if $w=1$, contrary to our assumption. 

If $\epsilon_2(w)\ne 0$, then $g=x_1$ and 
$k\,\epsilon_2(w)=\epsilon_2(uw^ku^{-1})=\epsilon_2(g^l)=0$ implies $k=0$. 
It follows that $l=\epsilon_1(g^l)=0$, 
therefore $H$ and the conjugate of $K$ intersect trivially. 

Finally, by Lemma~\ref{lem:secat of inclusion}, $\tc(d_w)\ge\cd(H)=2$. 
On the other hand $\tc(d_w)\le\cd(F_2\times \ZZ)=2$, therefore the equality holds.
\end{proof}
\end{example}

\begin{example} (Linear mapping tori) In this section we consider semi-direct products of the form
$\ZZ^n\rtimes \ZZ$ in which the action $\varphi\colon\ZZ\to\Aut(\ZZ^n)=\mathrm{Gl}_n(\ZZ)$ is determined
by a choice of an invertible integer matrix $\varphi(1)=A\in \mathrm{Gl}_n(\ZZ)$. By
Theorem \ref{thm:cd estimates for tc}, $\tc(d)$ is either $n$ or $n+1$, and by Corollary 
\ref{cor:K abelian} all derivations give the same value, so it is sufficient to compute $\tc(d_0)$
for the trivial derivation $d_0$. We are going to use \cite[Thm. 5.2]{EBFMO} 
that we already discussed in Section \ref{subs:Group-theoretic lower bounds} and which states that
$$\secat(BG\to B\Gamma)\ge\cd(\Gamma)-\kappa, $$
where $\kappa=\max\{\cd(G\cap G^\gamma)\mid \gamma\in \Gamma-G\}.$ 
By our discussion at the end of section \ref{subs:Group-theoretic lower bounds}, 
$\kappa$ is the maximum over all non-zero elements $x\in\ZZ^n$ of the stabilizers of the action of 
$\ZZ$ on $x$. The action of $k\in\ZZ$ on $x\in\ZZ^n$ is given by $k\cdot x=A^kx$, so we obtain 
the following result.

\begin{proposition}
Let $A\in\mathrm{Gl}_n(\ZZ)$ be a matrix that does not have roots of unity in its
spectrum. 
Then for every derivation $d\colon\ZZ\to\ZZ^n$ in $\ZZ^n\rtimes_A\ZZ$ we have $\tc(d)=n+1$.
\end{proposition}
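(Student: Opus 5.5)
By Corollary \ref{cor:K abelian}, $\ZZ^n$ is abelian, so $\tc(d)$ is the same for every derivation. It therefore suffices to treat the trivial derivation $d_0$, whose splitting $s_0$ is the standard inclusion of $\ZZ$ into $\Gamma:=\ZZ^n\rtimes_A\ZZ$. By Theorem \ref{thm:tc=secat}(2), $\tc(d_0)=\secat(\pi_0)$, where $\pi_0\colon B\ZZ\to B\Gamma$ is the covering determined by $s_0(\ZZ)\le\Gamma$. Theorem \ref{thm:cd estimates for tc} already gives $\tc(d_0)\le n+1$, so only the lower bound $\tc(d_0)\ge n+1$ remains.

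For the lower bound I will apply \cite[Thm. 5.2]{EBFMO}, namely $\secat(\pi_0)\ge\cd(\Gamma)-\kappa$, and check its hypotheses.

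\emph{Geometric finiteness.} The group $\Gamma$ is the fundamental group of the mapping torus of the linear automorphism of $T^n$ induced by $A$. This is a closed aspherical $(n+1)$-manifold, so $\Gamma$ is geometrically finite and $\cd(\Gamma)=n+1$.

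\emph{Computing $\kappa$.} By the discussion at the end of Section \ref{subs:Group-theoretic lower bounds}, $\kappa$ is the maximum of $\cd(\mathrm{Stab}_\ZZ(x))$ over nonzero $x\in\ZZ^n$. Suppose $0\ne k\in\mathrm{Stab}_\ZZ(x)$. Then $A^kx=x$, so $1$ is an eigenvalue of $A^k$. The eigenvalues of $A^k$ are the $k$-th powers of the eigenvalues of $A$, so some eigenvalue $\lambda$ of $A$ satisfies $\lambda^k=1$, i.e.\ $\lambda$ is a root of unity. This contradicts the hypothesis on $A$. Hence every stabilizer of a nonzero $x$ is trivial, and $\kappa=0$.

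Combining these, $\tc(d_0)\ge (n+1)-0=n+1$, which gives equality.

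The step that needs the most care is the reduction of $\kappa$ to stabilizers. The conjugation calculation in Section \ref{subs:Group-theoretic lower bounds} needs to be rechecked in the present notation: for $\gamma=(k,g)$ with $k\ne0$, the intersection $\ZZ\cap\ZZ^\gamma$ should be $\mathrm{Stab}(k)$ up to the action of $g$. Even if the correct formula turns out to involve $\mathrm{Stab}(g\cdot k)$ or a similar variant, the argument is unaffected. Every nonzero vector has trivial stabilizer, so every intersection $\ZZ\cap\ZZ^\gamma$ with $\gamma\notin\ZZ$ is trivial, and $\kappa=0$ regardless.

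As an alternative route, the same triviality of intersections lets Lemma \ref{lem:secat of inclusion} be applied directly, with the subgroup $H$ taken large enough to force $\cd(H)=n+1$. However, the \cite{EBFMO} bound is the cleaner choice here.
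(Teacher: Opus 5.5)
Your proof is correct and follows the paper's route: reduce to the trivial derivation via Corollary \ref{cor:K abelian}, then apply \cite[Thm.~5.2]{EBFMO} with $\kappa=0$, since no nonzero power of $A$ fixes a nonzero vector. Your check of geometric finiteness and of $\cd(\ZZ^n\rtimes_A\ZZ)=n+1$, via the closed aspherical mapping torus, fills in details the paper leaves implicit.

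Do drop the closing ``alternative route'', because it would fail. Any $H$ that meets every conjugate of $s_0(\ZZ)$ trivially has $\cd(H)\le n$. Indeed, $\cd(H)=n+1$ would force $H\cap\ZZ^n$ to have finite index in $\ZZ^n$ and $H$ to have nontrivial image in $\ZZ$. Then $H$ has finite index in $\ZZ^n\rtimes_A\ZZ$ and so meets $s_0(\ZZ)$ nontrivially. Hence Lemma \ref{lem:secat of inclusion} only recovers the trivial bound $n$.
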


The situation is more complicated when the spectrum of $A$ contains roots of unity. We are able to treat
some special cases, analogous to the Klein bottle from Example \ref{ex:Klein bottle}. For simplicity,
let us consider one specific instance, the semi-direct product 
$$\ZZ^3\rtimes_A \ZZ,$$ 
where $A$ is a diagonal matrix with diagonal
entries 1,-1 and -1. The $\mathbb{F}_2$-cohomology of $\ZZ^3\rtimes_A \ZZ$ can be computed 
using Bockstein
operations: it has four generators, $u$ which comes from the projection of the mapping torus 
to the base circle,
and $x,y,z$, coming from the fibre. Their products satisfy the relations
$$u^2=x^2=0,\ \ y^2=uy  \ \ z^2=uz\ \ \text{and}\ \ uxyz\ne 0.$$
The classes $x,y,z$ are in the kernel $\Ker s^\ast$ of the canonical section $s$. Since
$xy^2z=uxyz$, the cup length of $\Ker s^\ast$ is 4, therefore $\tc(d)=4$ for every 
derivation $d\colon\ZZ\to \ZZ^3$.

To contrast, consider the integral Heisenberg group 
\[
H=\left\{
\begin{pmatrix}
1 & a & c\\
0 & 1 & b\\
0 & 0 & 1
\end{pmatrix}
\;\middle|\;
a,b,c\in\mathbb Z
\right\}.
\]

Let $K\le H$ consist of matrices with $b=0$, and let $G\le H$ consist of matrices with $a=c=0$.
Then $K\cong\ZZ^2$, $G\cong \ZZ$ and $H$ is isomorphic to the semi-direct product $K\rtimes G$, where 
the action is determined by the matrix 
$$
A=
\begin{pmatrix}
1 & 1\\
0 & 1
\end{pmatrix}
$$
The cohomology ring of $BH$ with $\Z_2$-coefficients can be computed from the 
Lyndon-Hochschild-Serre spectral sequence, which yields additive generators
$a,b\in H^1(H;\Z_2), u,v\in H^2(H;\Z_2)$ and $w\in H^3(H;\Z_2)$, with the relations
$a^2=b^2=ab=0$, $au=bv=0$ and $av=bu=w$ (other products are zero for dimensional reasons).
Furthermore, section $s\colon G\to H$ satisfies $s^*(a)=0, s^*(b)\ne 0$ in $H^1(G;\Z_2)$, so
$\Ker(s^*)$ is the linear span of elements $a,u,v,w$. Clearly, the cup-length 
of $\Ker(s^*)$ is 2, therefore $\tc(d)\ge 2$. On the other hand, the dimensional upper 
bound is $\tc(d)\le \cd(H)=3$, which leaves us with a partial result $\tc(d)\in\{2,3\}$ for
derivations of the Heisenberg group.

\qed
\end{example}

\subsection{Pure braid groups} One of our motivating examples for the study of semi-direct products 
was the standard decomposition of pure braid groups (see \cite{KasselTuraev2008} for the notation and main
properties). Let $P_n$ denote the pure braid group on $n$-strands. By erasing the $n$-th strand we
obtain a projection $p\colon P_n\to P_{n-1}$, whose kernel is isomorphic to $F_{n-1}$ , the free
group on $n-1$ generators. The obvious inclusion of $s\colon P_{n-1}\to P_n$ is
a section of $p$, so we obtain a decomposition $$P_n\cong F_{n-1}\rtimes P_{n-1}.$$
The action of $P_{n-1}$ on $F_{n-1}$ can be described explicitly in terms of standard generators, 
see \cite[Sec.1.3]{KasselTuraev2008}. Note that the inclusion  $s_0\colon P_{n-1}\to P_n$ mentioned above
corresponds to the trivial derivation $d_0\colon P_{n-1}\to F_{n-1}$. 

Let us next recall a very useful model for the classifying space of a pure braid group. Denote by 
$F(\RR^2,n)$ the configuration space of $n$ different points in the plane. It is well known 
(cf. \cite[Sec. 1.4]{KasselTuraev2008}) that $F(\RR^2,n)$ is aspherical and that its fundamental group 
is precisely $P_n$, therefore $F(\RR^2,n)\simeq BP_n$. In addition, the \emph{Fadell-Neuwirth fibration}
$q\colon F(\RR^2,n)\to F(\RR^2,n-1)$, given by the projection to the first $n-1$ components, induces the
homomorphism $p\colon P_n\to P_{n-1}$, and the inclusion $i\colon F(\RR^2,n-1)\to F(\RR^2,n)$ induces 
the section $s\colon P_{n-1}\to P_n$. Finally, the fibre of $q$ over a point 
$(x_1,\ldots,x_{n-1})\in F(\RR^2,n-1)$ is clearly $\RR^2-\{x_1,\ldots,x_{n-1}\}$, which is homotopy 
equivalent to the wedge of $n-1$ circles, which we denote $V_{n-1}$. To summarize, 
the fibration sequence 
$$\xymatrix{
V_{n-1}\ar@{^(->}[r] & F(\RR^2,n) \ar[r]_-q &  F(\RR^2,n-1)) \ar@<-1ex>[l]_-i
}$$
models the classifying space fibration associated to the split short exact of groups
$$\xymatrix{
F_{n-1}\ \ar[r] & P_n \ar[r]_-p & P_{n-1} \ar@<-1ex>[l]_-{s_0}
}$$
We may conclude that the topological complexity of the trivial derivation $d_0\colon P_{n-1}\to F_{n-1}$ 
can be computed as
$$\tc(d_0)=\cat_{F(\RR^2,n-1)}(F(\RR^2,n))=\secat(i\colon F(\RR^2,n-1)\to F(\RR^2,n)).$$

More generally,  we may iterate the projections between pure braid groups. 
Then, for every $2\le m<n$, we obtain a projection $p\colon P_n\to P_m$ which admits a section
$s_0\colon P_m\to P_n$. Therefore, there is an action of $P_m$ on $\Ker(p)$, $P_n\cong \Ker(p)\rtimes P_m$ 
and the section $s_0$ corresponds to the trivial derivation $d_0\colon P_m\to\Ker(p)$.
Clearly, $\tc(d_0)=\secat(i\colon F(\RR_m,2)\to F(\RR_n,2))$.

\begin{theorem}
Consider the semi-direct product decomposition $P_n\cong \Ker(p)\rtimes P_m$ for $2\le m<n$, and let
$d_0\colon P_m\to \Ker(p)$ be the trivial derivation. Then $\tc(d_0)=n-1$.    
\end{theorem}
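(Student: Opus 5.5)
The upper bound comes directly from Theorem \ref{thm:cd estimates for tc}. It gives $\tc(d_0)\le\cd(P_n)$, and $\cd(P_n)=n-1$: the configuration space $F(\RR^2,n)$ is a $K(P_n,1)$ of the homotopy type of an $(n-1)$-dimensional complex, and its top cohomology $H^{n-1}(F(\RR^2,n);\ZZ)\ne 0$. So all the work is in the lower bound $\tc(d_0)\ge n-1$.

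I would not use the cup-length bound of Theorem \ref{thm:coho estimate for tc}. The image of the section $s_0$ is the inclusion that adds the points $m+1,\dots,n$ far away. This inclusion kills only the Arnold generators $\omega_{ij}$ with $j>m$ and is injective on the span of the others. So $\Ker s_0^*$ in degree one is spanned by $n-m$ classes, and I expect its cup-length to be too small in general.

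Instead I would use Theorem \ref{thm:tc=secat} together with Lemma \ref{lem:secat of inclusion}. The goal is a subgroup $H\le P_n$ with $\cd(H)=n-1$ that meets every conjugate of $s_0(P_m)$ trivially. The tool for detecting this is the family of winding-number homomorphisms $\omega_{ij}\colon P_n\to\ZZ$ for $i<j$. Each one is given by the degree of the map $x\mapsto (x_j-x_i)/|x_j-x_i|$ on loops, and it is the abelianization coordinate dual to the generator $A_{ij}$. Two facts drive the argument:
\begin{itemize}
\item Since the target $\ZZ$ is abelian, each $\omega_{ij}$ is invariant under conjugation.
\item $\omega_{in}$ vanishes on $s_0(P_m)$ for every $i<n$, because in the image of $i\colon F(\RR^2,m)\to F(\RR^2,n)$ the $n$-th point sits far away and does not wind around the others.
\end{itemize}
Consequently $\omega_{in}$ vanishes on every conjugate $g\,s_0(P_m)\,g^{-1}$.

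Next I would take $H=\langle T_1,\dots,T_{n-1}\rangle$, where $T_k$ is the full twist on the nested set of strands $\{k,k+1,\dots,n\}$. Nested full twists commute, and they generate a free abelian group of rank $n-1$. This holds because the map
$$\Omega=(\omega_{1n},\dots,\omega_{n-1,n})\colon H\to\ZZ^{n-1}$$
satisfies $\omega_{in}(T_k)=1$ for $k\le i$ and $\omega_{in}(T_k)=0$ for $k>i$, so it is unitriangular on the generators. Hence $H\cong\ZZ^{n-1}$, $\cd(H)=n-1$, and $\Omega$ is injective on $H$. Now take $h\in H$ lying in some conjugate of $s_0(P_m)$. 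Then $\Omega(h)=0$ by the vanishing above, so injectivity gives $h=1$. Lemma \ref{lem:secat of inclusion}, applied to the subgroup $s_0(P_m)$ with this $H$, gives
$$\tc(d_0)=\secat(\pi_{d_0})\ge\cd(H)=n-1.$$

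The main obstacle is justifying the braid-theoretic inputs. First, the full twists $T_k$ must pairwise commute. They do, because the full twist on a set of strands is central in the braid group of that set, and a twist on a larger nested set acts on a smaller block as a rigid rotation. Second, their winding numbers must be exactly as stated. Both facts are seen most cleanly in the configuration-space model, by realizing each $T_k$ as a rigid rotation of a disc containing the points $k,\dots,n$ and computing the winding numbers geometrically. The vanishing of $\omega_{in}$ on the section, and the conjugation invariance, are then immediate.
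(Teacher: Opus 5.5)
Your proof is correct, but your lower bound takes a different route from the paper's.

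\textbf{The paper's route.} The paper uses the cup-length estimate of Theorem \ref{thm:coho estimate for tc}. For $m=n-1$, the classes $\omega_{1,n},\dots,\omega_{n-1,n}$ lie in $\Ker(i^*)$. By the Fadell--Husseini description of $H^*(F(\RR^2,n);\ZZ)$, their product generates $H^{n-1}$. General $m$ is then handled by Proposition \ref{prop:secat of composition}, applied to $F(\RR^2,m)\hookrightarrow F(\RR^2,n-1)\hookrightarrow F(\RR^2,n)$.

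\textbf{Your route.} You use the group-theoretic bound of Lemma \ref{lem:secat of inclusion}. The witness is the rank-$(n-1)$ free abelian subgroup of nested full twists, detected by the conjugation-invariant homomorphisms $\omega_{in}$. Your ingredients all hold:
\begin{itemize}
\item nested twists commute, by centrality of $\Delta^2$ in the braid group on $\sigma_k,\dots,\sigma_{n-1}$;
\item the winding-number matrix $\omega_{in}(T_k)$ is unitriangular, so $H\cong\ZZ^{n-1}$ and $\Omega|_H$ is injective;
\item each $\omega_{in}$ vanishes on $s_0(P_m)$ for $m<n$.
\end{itemize}
Your argument also treats every $m$ at once. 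What it buys is that you only need $H^1(P_n)$, i.e.\ the abelianization, not the multiplicative structure of $H^*(P_n)$. The price is constructing and checking the torus subgroup.

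\textbf{A mistaken aside.} Your stated reason for avoiding cup-length is wrong, though your proof does not depend on it. In degree one, $\Ker s_0^*$ is spanned by all $\omega_{ij}$ with $j>m$, which is $\binom{n}{2}-\binom{m}{2}$ classes, not $n-m$. It contains exactly the $n-1$ classes $\omega_{1n},\dots,\omega_{n-1,n}$ that you yourself show vanish on the section. Their product is nonzero, so $\cl(\Ker s_0^*)\ge n-1$ directly. In fact your subgroup proves this: since $\Omega|_H$ is an isomorphism onto $\ZZ^{n-1}$, the product $\omega_{1n}\cdots\omega_{n-1,n}$ restricts to a generator of $H^{n-1}(H;\ZZ)$. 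So the two arguments detect the same classes, and your torus is a concrete witness for the nonvanishing product the paper uses.
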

\begin{proof}
We need to compute $\secat(i\colon F(\RR_m,2)\to F(\RR_n,2))$. By Theorems \ref{thm:catB(X)}(3)
and \ref{thm:coho estimate for tc}
$$\cat(F(\RR_n,2))\ge \secat(i\colon F(\RR_m,2)\to F(\RR_n,2))\ge \cl(\Ker(i^*)).$$
It is known that the homotopy dimension of $F(\RR_n,2)$ is $n-1$, therefore 
$\cat(F(\RR_n,2))\le n-1$. 

For the lower bound, we use the cohomological estimate based on the description and properties 
of the cohomology ring of the configuration spaces as given in \cite[Ch. V]{FadellHusseini2001}.
The integral cohomology of $F(\R^{2},n)$ is commutative graded algebra
$$  H^{*}\!\bigl(F(\R^{2},n);\Z\bigr)
  =\Lambda(\omega_{i,j})\big/
   \bigl(\omega_{i,j}\omega_{i,k}-\omega_{i,j}\omega_{j,k}+\omega_{i,k}\omega_{j,k}\bigr),
  \qquad |\omega_{ij}|=1,\ \ 1\le i<j\le n .
$$
In particular, the squarefree monomial 
$$\omega_{1,n}\cdot\omega_{2,n}\cdot\cdots\cdot\omega_{n-1,n}$$
is nonzero and it generates $H^{n-1}\bigl(F(\R^{2},n);\Z\bigr)$.
By comparing the cohomology rings of $F(\R^{2},n)$ and $F(\R^{2},n-1)$ we can easily deduce 
that elements $\omega_{1,n},\omega_{2,n}\ldots\omega_{n-1,n}$ are contained in $\Ker(i^*)$, 
therefore $\cl(\Ker(i^*))\ge n-1$. We have thus proved that $\secat(i)=n-1$.

The general claim follows by applying \ref{prop:secat of composition} to the sequence
$F(\RR^2,m)\hookrightarrow F(\RR^2,n-1)\hookrightarrow F(\RR^2,n)$.
 \end{proof}
In Example \ref{ex:trivial F2} we computed $\tc(d)$ for all derivations $\ZZ\to F_2$, while
for non-trivial action we usually computed $\tc(d)$ only for trivial derivation. In favorable circumstances, 
we can extend the computation for all derivation even when the action is non-trivial. One such example 
arise in pure braid groups.

\begin{example}
Consider the homomorphism $p\colon P_n\to P_2\cong\ZZ$, given by the projection of a 
braid onto the last two strands. 
This yields the decomposition of
$P_n$ as a semi-direct product $\Ker(p)\rtimes\ZZ$. However, we may change the set of generators for $P_n$ 
by replacing one of them with the element that represents the full twist of all the strands and  
generates the centre of $P_n$. This leads to the alternative representation $P_n\cong\Ker(p)\times\ZZ$.  
To simplify the notation, let us consider the case $P_3\cong F_2\rtimes \ZZ\cong F_2\times \ZZ$.
It is not difficult to compute the isomorphism $\theta\colon F_2\times \ZZ\to F_2\rtimes \ZZ$ explicitly.
Let $F_2$ be the free group on two letters $x,y$ and let $c=xy\in F_2$. The action $\varphi\colon\ZZ\to\Aut(F_2)$
that gives the semi-direct decomposition of $P_3$ sends the generator of $\ZZ$ to the conjugation by $c$. Thus, 
for every $w\in F_2$ we have 
$$[\varphi(k)](w)=c^k\,w\,c^{-k}.$$
It is easy to check that the isomorphism $\theta\colon F_2\times \ZZ\to F_2\rtimes \ZZ$ is given by 
the formula $$\theta(w,k):=(w\,c^k,k).$$
The crucial fact is that the isomorphism preserves the second coordinate, because it implies that for
every section $s\colon\ZZ\to F_2\times\ZZ$ we obtain the section $\theta\colon \ZZ\to F_2\rtimes \ZZ$.  
As a consequence, we get a bijection between derivations $\ZZ\to F_2$ with respect to the trivial action
and derivations $\ZZ\to F_2$ with respect to the Artin action $\varphi$. Explicitly, derivation
$d_w$ for the trivial action (notation as in  \ref{ex:trivial F2}) corresponds to derivation
$\tilde d_w\colon k\mapsto w\,c^k$ with respect to the action $\varphi$ (that $\tilde d_w$ is indeed
a derivation is easily checked by taking into account that $c$ is a central element). In particular,
to the trivial derivation $d_0$ corresponds the derivation $\tilde d_0\colon k\mapsto c^k$. By applying 
the classifying space functor we obtain the following diagram
$$\xymatrix{
& B\ZZ \ar[dl]_{\pi_w}  \ar[dr]^{\tilde \pi_w}\\
B(F_2\times\ZZ) \ar[rr]_{B\varphi} & & B(F_2\rtimes\ZZ)
}$$
where $\pi_w$ and $\tilde \pi_w$ are determined respectively by derivations $d_w$ and $\tilde d_w$.
Since $B\varphi$ is a homotopy equivalence, we obtain $\secat(\pi_w)=\secat(\tilde\pi_w)$. Finally, 
Theorem \ref{thm:tc=secat} implies the following result:

\begin{proposition}
Consider the semi-direct product decomposition $P_3=F_2\rtimes\ZZ$ and let $c$ be the product of the
generators of $F_2$ that generates the centre of $P_3$. For every $w\in F_2$ define 
a derivation $d_w\colon \ZZ\to F_2$, $d_2\colon k\mapsto w\, c^k$. Then 
$\tc(d_w)=1$ if $w=1$ and $\tc(d_w)=2$ if $w\ne 1$.
\end{proposition}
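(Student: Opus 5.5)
The plan is to transport the problem, via the explicit isomorphism $\theta\colon F_2\times\ZZ\to F_2\rtimes\ZZ$, $\theta(w,k)=(w\,c^k,k)$, to the trivial action, where everything has already been computed in Example \ref{ex:trivial F2}. The key point is that $\theta$ commutes with the projections to $\ZZ$: $p\circ\theta=\pr_\ZZ$. Consequently, for every homomorphism $d\colon\ZZ\to F_2$ (a derivation for the trivial action) the composite $\theta\circ s_d$ is again a splitting of $p\colon F_2\rtimes\ZZ\to\ZZ$. For $d=d_w$ (the homomorphism $1\mapsto w$) we compute $\theta(s_{d_w}(k))=\theta(w^k,k)$. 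Since the derivation in the statement is $k\mapsto w\,c^k$, I will check directly that this is a derivation for the conjugation action. It is one because $c$ is central in the relevant sense: the twisted derivation rule $d(k+l)=d(k)\cdot c^k d(l) c^{-k}$ reduces to the corresponding identity for $\theta$. I will then identify which trivial-action derivation it corresponds to under $\theta^{-1}$. The section $k\mapsto(w\,c^k,k)$ pulls back under $\theta^{-1}(u,k)=(u\,c^{-k},k)$ to $k\mapsto(w\,c^kc^{-k},k)$, which is only well defined as a homomorphism if the bookkeeping is done carefully. In any case, each splitting of $p$ corresponds bijectively, via $\theta$, to a homomorphism $\ZZ\to F_2$, namely the value of the pulled-back section on $1$. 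The derivation equals $d_0$ exactly when the original $w$ is trivial, and otherwise it is a nontrivial homomorphism.

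Next I will apply Lemma \ref{lem:tc(d)=tc(d')}, or rather its proof via Theorem \ref{thm:tc=secat}(2), to an isomorphism between two different semi-direct products. The relevant triangle of classifying spaces has $B\ZZ$ mapping to $B(F_2\times\ZZ)$ and to $B(F_2\rtimes\ZZ)$, and its bottom map $B\theta$ is a homotopy equivalence. Sectional category of a map is invariant under composing with a homotopy equivalence of the target, so $\secat(\pi_w)=\secat(\tilde\pi_w)$. Theorem \ref{thm:tc=secat} then gives $\tc(\tilde d_w)=\tc(d_{w'})$, where $d_{w'}$ is the corresponding trivial-action homomorphism. Strictly speaking, Theorem \ref{thm:tc=secat} describes $\tc$ as the secat of the covering associated to the subgroup $\im s_d$. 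That description is invariant under isomorphisms of the ambient group carrying one subgroup to the other, so this step is harmless.

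Finally, I will evaluate the trivial-action side. If the corresponding homomorphism is trivial, Example \ref{ex:trivial derivation} gives $\tc=\cd(F_2)=1$. Otherwise, Proposition \ref{prop:ex free by Z} gives $\tc=2$.

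The main obstacle is the bookkeeping in the first step: confirming the precise correspondence between $w$ and the trivial-action homomorphism, in particular that $w=1$ corresponds exactly to the trivial homomorphism, and that the derivation formula in the statement really satisfies the twisted cocycle condition. I would settle this by working with subgroups instead of formulas. The image of the section in the statement is $\{(w\,c^k,k)\}$ in the paper's normalization, and I would compute its preimage $\theta^{-1}$ as a subgroup $\{(v^k,k)\}$ of $F_2\times\ZZ$, then read off whether $v=1$.
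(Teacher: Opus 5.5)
Your route is the same as the paper's: pull back along $\theta$ to the trivial action, use that $B\theta$ is a homotopy equivalence together with Theorem~\ref{thm:tc=secat}, and conclude with Example~\ref{ex:trivial derivation} and Proposition~\ref{prop:ex free by Z}. However, the step you flagged as the main obstacle is a genuine gap, and the way you propose to close it cannot work. The map $k\mapsto w\,c^k$ is not a derivation when $w\neq 1$. Any derivation satisfies $d(0)=d(0)\cdot[\varphi(0)](d(0))=d(0)\,d(0)$, so $d(0)=1$, whereas here $d(0)=w$. Your claim that it obeys the twisted cocycle rule ``because $c$ is central'' is therefore false; note also that $c=xy$ is not central in $F_2$. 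The subgroup computation fails for the same reason: $\{(w\,c^k,k)\}$ does not contain $(1,0)$, and its preimage under $\theta$ is $\{(w,k)\mid k\in\ZZ\}$, which is not a subgroup of the form $\{(v^k,k)\}$. The formula in the statement is a misprint, and the paper's discussion preceding the proposition contains the same slip.

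The fix is to finish the computation you started: $\theta(s_{d_w}(k))=\theta(w^k,k)=(w^kc^k,k)$. So the derivation attached to $d_w$ is $\tilde d_w(k)=w^kc^k$, defined by $s_{\tilde d_w}=\theta\circ s_{d_w}$. This identity is exactly what gives $B\theta\circ\pi_w\simeq\tilde\pi_w$. It also shows that $\tilde d_w$ pulls back to $d_w$, which is trivial if and only if $w=1$. With the statement corrected in this way, the rest of your argument goes through.

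Two further cautions:
\begin{enumerate}
\item Check $\theta$ against the action convention. With $[\varphi(k)](u)=c^kuc^{-k}$ as written, $(u,k)\mapsto(uc^k,k)$ is not a homomorphism: $\theta(1,1)\theta(v,0)=(c^2vc^{-1},1)$, while $\theta(v,1)=(vc,1)$. One needs $\theta(u,k)=(uc^{-k},k)$, which gives $\tilde d_w(k)=w^kc^{-k}$. This does not change the values of $\tc$.
\item The case $w=1$ is the derivation $k\mapsto c^{\pm k}$, not the trivial derivation of the conjugation action. The trivial derivation pulls back to the nontrivial homomorphism $d_{c^{\mp 1}}$ and has $\tc=2$, consistent with the pure braid theorem for $n=3$, $m=2$. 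So your phrase ``equals $d_0$'' must refer to the trivial-action homomorphism.
\end{enumerate}
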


In a similar manner, we may compute $\tc$ for all derivations relative to the 
semi-direct decomposition of $P_n$ over $P_2$. Details are left to the reader.
\end{example}

\subsection{Almost-direct products of free groups} By iterating the semi-direct decomposition of
pure braid groups we may obtain a representation
$$P_n\cong F_{n-1}\rtimes(F_{n-2}\rtimes\cdots F_1)$$
as an iterated semi-direct product of free groups. D. Cohen \cite{Cohen} observed that many 
properties of pure braid groups may be extended to iterated semi-direct products of free groups
that satisfy an additional assumption. 

A group $\Gamma$ is an \emph{almost-direct product of free groups} if
$$\Gamma \;=\; \bigrt_{q=1}^{l} F_{d_q}   \;=\; F_{d_l}\rt\bigl(F_{d_{l-1}}\rt\cdots\rt F_{d_1}\bigr), $$
where $F_{d_q}=\langle x_{q,1},\dots,x_{q,d_q}\rangle$ is a free group of rank $d_q$ ($d_q\ge 1$) and such that
for each $1\le j<k\le l$ the induced action of $\bigrt_{q=1}^{j} F_{d_q}$ on the abelianization of $F_{d_k}$
is trivial. It is easy to check that $\cd(\Gamma)=l$.

Cohen {\cite[Thm.~3.1, Cor.~2.5, Lem.~3.3, Rem.~3.4]{Cohen}} gives the following description of the 
cohomology ring of the almost-direct product $\Gamma \;=\; \bigrt_{q=1}^{l} F_{d_q} $.
Let
$$  E \;=\; \Lambda\bigl(e_{i,p}\mid 1\le i\le L,\ 1\le p\le d_i\bigr)$$
be the exterior algebra on the degree-one classes dual to the generators of the free groups; thus
$E=\HH(\Z^{N})$, where $N=\sum_i d_i$ is the rank of $\Gamma^{\mathrm{ab}}$, the abelianization of $\Gamma$. 

\begin{theorem}[]\label{thm:cohen}
$\HH(\Gamma)\cong E/J$, where $J$ is the two-sided ideal generated by the quadratic
classes
$$  \eta_j^{p,q}\;=\;
  e_{j,p}e_{j,q}\;+\;\sum_{i=1}^{j-1}\sum_{r,s}\kappa^{p,q,r,s}_{i,j}\,e_{i,r}e_{j,s},
  \qquad 1\le j\le L,\ 1\le p<q\le d_j,
$$
whose coefficients are read off from the Fox calculus of the defining relations.
These classes form a quadratic Gr\"obner basis of $J$ for the degree-lexicographic
order and $E/J$ has the monomial basis
$$  \bigl\{\,\bar e_{Q}=\bar e_{1,Q_1}\cdots\bar e_{L,Q_L}  \ \bigm|\ |Q_i|\le 1\ \text{for each } i\,\bigr\}.$$
\end{theorem}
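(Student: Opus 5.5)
The plan is to prove the description by induction on $L$, following Cohen's approach. The key input is that an almost-direct product has a trivial action on the cohomology of the fibres. This forces the Lyndon--Hochschild--Serre spectral sequence to collapse and pins down the multiplicative structure.

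\emph{Step 1: the spectral sequence collapses.} Write $\Gamma_L=F_{d_L}\rtimes\Gamma_{L-1}$. The action of $\Gamma_{L-1}$ on $H^1(F_{d_L})=\mathrm{Hom}(F_{d_L}^{ab},\Z)$ is trivial by the almost-direct hypothesis. Moreover $H^q(F_{d_L})=0$ for $q\ge 2$. So the LHS spectral sequence has only two rows, $E_2^{p,0}=H^p(\Gamma_{L-1})$ and $E_2^{p,1}=H^p(\Gamma_{L-1})\otimes H^1(F_{d_L})$, where the Künneth identification holds because $H^1(F_{d_L})$ is free abelian. The section $s_0$ makes $p^*$ injective. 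We also need that every class of $H^1(F_{d_L})$ extends to $H^1(\Gamma_L)$: the classes $e_{L,p}$, dual to the generators $x_{L,p}$, extend to $\Gamma_L^{ab}\cong\Z^N$, again because the action on $F_{d_L}^{ab}$ is trivial. Hence $d_2$ vanishes on the generators of the row $q=1$ as an $E_2^{*,0}$-module, so $E_2=E_\infty$. Consequently $H^*(\Gamma_L)$ is a free $H^*(\Gamma_{L-1})$-module with basis $1,e_{L,1},\dots,e_{L,d_L}$. Combined with the inductive hypothesis, this gives additively the monomial basis $\bar e_Q$ with $|Q_i|\le 1$. It also shows that $H^*(\Gamma)$ is generated by degree-one classes, so the natural map $E\to H^*(\Gamma)$ induced by $\Gamma\to\Gamma^{ab}=\Z^N$ is surjective.

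\emph{Step 2: the quadratic relations.} Since $E_\infty^{*,q}=0$ for $q\ge2$, each product $e_{L,p}e_{L,q}$ lies in filtration at least $1$. By the module structure it can be written as $\sum_{i<L}\sum_{r,s}\kappa\, e_{i,r}e_{L,s}$, possibly plus a term from $H^2(\Gamma_{L-1})$. That base term vanishes: restrict along the section $s_0$ and use that $s_0^*e_{L,p}=0$, which holds because $e_{L,p}$ is chosen to vanish on $\Gamma_{L-1}$. This yields exactly the relations $\eta_L^{p,q}\in J$. The coefficients $\kappa$ are determined by evaluating on $H_2$. Concretely, they are computed from Magnus/Fox calculus applied to the relators $g\,x_{L,s}\,g^{-1}=x_{L,s}\cdot(\text{commutators})$ of the presentation of $\Gamma$; this is the standard way cup products of one-dimensional classes are read off a presentation (Fenn--Sjerve).

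\emph{Step 3: Gröbner basis and comparison.} Order the variables so that $e_{L,*}$ is largest. Then the leading monomial of $\eta_j^{p,q}$ is $e_{j,p}e_{j,q}$. Every monomial of $E$ containing two factors with the same index $j$ is divisible by such a leading term. Hence the standard monomials of $J$ are contained in $\{e_Q: |Q_i|\le 1\}$, so $\dim E/J\le\#\{Q\}$ in each degree, working over $\Z$ via ranks of free modules. Since $E/J\to H^*(\Gamma)$ is surjective and the target has exactly this basis by Step 1, the map is an isomorphism. It follows that the standard monomials are linearly independent in $E/J$, which is precisely the Gröbner basis property, so no Buchberger S-pair check is needed.

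The main obstacle is the second half of Step 2: pinning down the coefficients $\kappa$ explicitly through Fox calculus. I would state that this only affects the form of the coefficients, not the structural result, and defer the computation to \cite{Cohen}. Carrying the argument over $\Z$ rather than a field requires checking freeness at each stage, which Step 1 provides.
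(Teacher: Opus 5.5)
The paper gives no proof of this theorem; it quotes it from Cohen (Thm.~3.1, Cor.~2.5, Lem.~3.3, Rem.~3.4), where the quadratic relations are computed explicitly from a presentation of $\Gamma$ by Fox calculus. Your route is structural, and in substance it is correct.

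\begin{itemize}
\item The two-row Lyndon--Hochschild--Serre spectral sequence does collapse. In fact the splitting alone forces $d_2=0$, because the edge map $p^*$ is injective, so your extension argument for $H^1$ proves the same thing a second time.
\item Multiplicativity, freeness of $H^1(F_{d_L})$ and a choice of lifts make $\HH(\Gamma_L)$ a free $\HH(\Gamma_{L-1})$-module on $1,e_{L,1},\dots,e_{L,d_L}$.
\item Restricting along $s_0$ is exactly the right way to kill the $H^2(\Gamma_{L-1})$-component of $e_{L,p}e_{L,q}$. This produces relations of precisely the shape $\eta_L^{p,q}$.
\item At the end you do not need rank counting. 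The images of the $e_Q$ span $E/J$ and are sent to a $\Z$-basis of $\HH(\Gamma)$, so $E/J\to\HH(\Gamma)$ is an isomorphism.
\end{itemize}

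Your approach gives a self-contained, computation-free proof of everything Lemma~\ref{lem:kernel} uses: generation in degree one, the form of the relations, the monomial basis, and $s_0^*(e_{L,p})=0$. It does not give the actual values of the $\kappa$'s, which the later examples need. For those you still have to evaluate cup products on a presentation $2$-complex via Fenn--Sjerve, as the cited source does.

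There is one incorrect step in Step 3. Putting the level-$L$ variables on top does \emph{not} make $e_{j,p}e_{j,q}$ the leading monomial of $\eta_j^{p,q}$ in a degree-lexicographic order. A cross term $e_{i,r}e_{j,s}$ wins whenever $e_{j,s}$ exceeds both $e_{j,p}$ and $e_{j,q}$, and such terms do occur.

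For example, let $F_3=\langle t,u,v\rangle$ act on $F_3=\langle x_1,x_2,x_3\rangle$ by $t\colon x_3\mapsto x_3[x_1,x_2]$, $u\colon x_1\mapsto x_1[x_2,x_3]$ and $v\colon x_2\mapsto x_2[x_1,x_3]$, each fixing the other two generators. This is an almost-direct product. Take $e_{1,1},e_{1,2},e_{1,3}$ dual to $t,u,v$ and $e_{2,k}$ dual to $x_k$. Then one finds $e_{2,1}e_{2,2}=\pm e_{1,1}e_{2,3}$, $e_{2,2}e_{2,3}=\pm e_{1,2}e_{2,1}$ and $e_{2,1}e_{2,3}=\pm e_{1,3}e_{2,2}$.

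Whichever of $e_{2,1},e_{2,2},e_{2,3}$ is largest, the relation whose cross term contains it has that cross term as its deglex leading monomial, for any ordering of the variables. If the largest is $e_{2,3}$, then $e_{1,1}e_{2,1}e_{2,2}=e_{1,1}\eta_2^{1,2}\in J$, yet it is divisible by no leading monomial. So for deglex the Gr\"obner clause can genuinely fail, and the last sentence of your Step 3 would establish a false claim.

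The repair is to use an order that first compares the number of factors in the highest levels. Examples are a weight order giving level $j$ weight $2^j$, or degree-reverse-lexicographic order with lower levels smaller. For such an order the leading monomials really are $e_{j,p}e_{j,q}$. The reduction that lowers the highest repeated level then shows the $\bar e_Q$ span $E/J$. With that, your spanning-plus-independence argument correctly yields the isomorphism $E/J\cong\HH(\Gamma)$ with its monomial basis, and also the Gr\"obner property for that order.
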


The leading term of $\eta_j^{p,q}$ is $e_{j,p}e_{j,q}$, so the standard monomials are
exactly those using at most one generator from each level.
 
\medskip
Fix $1\le m\le l-1$ and split off the top $m$ levels:
$$  K \;=\; \bigrt_{q=l-m+1}^{l} F_{d_q},   \qquad
  G \;=\; \bigrt_{q=1}^{l-m} F_{d_q},   \qquad   \Gamma \;=\; K\rt G . $$
The action of $G$ on the normal subgroup $K$ is in general non-trivial. Sections of
the projection $\Gamma\to G$ correspond to derivations $G\to K$; the trivial derivation
gives the canonical section $s_0\colon G\longrightarrow \Gamma$, given as the inclusion of $G$ as 
a complement of $K$. Write 
$T=\{e_{i,p}\mid i>l-m\}$ for the set of top-level classes of degree one.
 
\begin{lemma}\label{lem:kernel}
Under the identification of Theorem~\ref{thm:cohen}, $s_0^{*}\colon\HH(\Gamma)\to\HH(G)$
is the projection
$$  E_\Gamma/J_\Gamma\longrightarrow E_G/J_G,   \qquad
  e_{i,p}\longmapsto 
  \begin{cases}
    e_{i,p}, & i\le L-m,\\[2pt]
    0, & i>L-m,
  \end{cases}
$$
where $E_G=\Lambda(e_{i,p}\mid i\le L-m)$. Its kernel is the ideal $(T)$ generated by
the top-level classes.
\end{lemma}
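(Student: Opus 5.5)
The plan is to first identify $s_0^*$ in degree one and then use that $\HH(\Gamma)$ is generated by degree-one classes.

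\emph{Degree one.} Since $\Gamma=K\rtimes G$ and $s_0$ is the inclusion of the complement $G$, the composite $p\circ s_0=1_G$. On abelianizations, $\Gamma^{\mathrm{ab}}\cong\ZZ^N$ has the basis of images of the $x_{i,p}$; this uses the almost-direct hypothesis, since the lower levels act trivially on each $F_{d_k}^{\mathrm{ab}}$. The map $s_0$ sends $x_{i,p}\in G$ (for $i\le L-m$) to the element of $\Gamma$ with the same name. Hence on $H^1=\operatorname{Hom}(-,\ZZ)$ the class $e_{i,p}$ dual to $x_{i,p}$ restricts to $e_{i,p}$ when $i\le L-m$. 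When $i>L-m$, $e_{i,p}$ evaluates to zero on every generator of $G$, because no lower-level generator involves $x_{i,p}$ in the abelianization, so it restricts to $0$. Since $s_0^*$ is a ring homomorphism and $E_\Gamma/J_\Gamma$ is generated by degree-one classes (Theorem~\ref{thm:cohen}), this determines $s_0^*$ completely as the stated map. Well-definedness comes for free because $s_0^*$ is an honest ring map. As a consistency check, the relations $\eta_j^{p,q}$ with $j\le L-m$ involve only levels $i<j\le L-m$ and are exactly the relations of $J_G$, since the Fox-calculus coefficients for $G$ are the same. The relations with $j>L-m$ have every term containing some $e_{j,\cdot}$ with $j$ top-level, so they map to zero.

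\emph{Kernel.} Clearly $(T)\subseteq\Ker s_0^*$. For the reverse inclusion, use the monomial basis: $\HH(\Gamma)$ has basis $\bar e_Q$ with at most one generator per level. Split $\bar e_Q=\bar e_{Q'}\bar e_{Q''}$ into its lower part $Q'$ (levels $\le L-m$) and its top part $Q''$. The basis elements with $Q''\neq\emptyset$ lie in $(T)$. Those with $Q''=\emptyset$ are sent to $\bar e_{Q'}$, and these images are distinct standard monomials of $E_G/J_G$. This uses Theorem~\ref{thm:cohen} applied to the almost-direct product $G$, where the standard monomials are again those with at most one generator per level. Now take an element of the kernel and write it as $a+b$, with $a$ in the span of the basis elements having $Q''=\emptyset$ and $b\in(T)$. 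Then $s_0^*(a)=0$, and by linear independence of the images $a=0$. Hence $\Ker s_0^*=(T)$.

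\emph{Main obstacle.} The delicate step is the compatibility of the two Gröbner bases, namely that $J_\Gamma\cap E_G=J_G$, so that the lower standard monomials of $\Gamma$ map bijectively onto those of $G$. I expect to handle this through the triangular form of the $\eta_j^{p,q}$: the relation at level $j$ only involves levels $\le j$. Alternatively, it follows because $s_0^*$ is split surjective, since $p^*$ provides a splitting with $s_0^*p^*=1$. Indeed, $p^*(e_{i,p})=e_{i,p}$ for $i\le L-m$, so $p^*$ maps $E_G/J_G$ onto the span of the lower standard monomials, and a dimension count in each degree gives the required injectivity.
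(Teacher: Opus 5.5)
Your proposal is correct, and for identifying $s_0^*$ it is the paper's argument: compute $s_0^*$ on $H^1$ as the dual of the coordinate inclusion $G^{\mathrm{ab}}\to\Gamma^{\mathrm{ab}}\cong\ZZ^N$ (using the almost-direct hypothesis), then extend multiplicatively because both rings are generated in degree one. The paper stops there and never argues $\Ker s_0^*=(T)$, so your standard-monomial argument fills that gap; it is already complete as written, because multiplicativity of $s_0^*$ together with Theorem~\ref{thm:cohen} applied to $G$ sends the lower standard monomials to distinct basis elements of $\HH(G)$, and so the compatibility $J_\Gamma\cap E_G=J_G$ that you flag as the main obstacle is never needed.
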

\begin{proof}
Both rings are generated in degree one, so $s_0^*$ is determined by its restriction to
$H^1$, where it is dual to the map of abelianizations induced by $s_0$. Since the induced
actions on the abelianizations are t and $G$ is a
sub-semidirect product on the levels $i\le l-m$, this map is the coordinate
inclusion onto the summands with $i\le l-m$; dualizing gives the stated formula.
\end{proof}

The computation of $\tc(d_0)$ is based on the knowledge of the structural coefficients 
of the ideal generators $J$. Consider two explicit examples. 
 
\begin{example}[intermediate values]
Let $\Gamma=F_2\rt(F_2\rt F_2)$ (so $l=3$ and $d=(2,2,2)$) with $x_{1,1}$ acting on the
middle factor by $x_{2,1}\mapsto x_{2,2}^{-1}x_{2,1}x_{2,2}$, with $x_{2,1}$ acting on the
top factor by $x_{3,1}\mapsto x_{3,2}^{-1}x_{3,1}x_{3,2}$, and with all remaining actions
trivial. These assignments are compatible, so $\Gamma$ is indeed an almost-direct
product, and Cohen's algorithm yields
$$ \eta_1^{1,2}=e_{1,1}e_{1,2},  \qquad  \eta_2^{1,2}=e_{2,1}e_{2,2}-e_{1,1}e_{2,1},
  \qquad   \eta_3^{1,2}=e_{3,1}e_{3,2}-e_{2,1}e_{3,1}.$$
For $m=1$ we consider the derivation $d_0\colon F_2\rtimes F_2\to F_2$. Then $T=\{e_{3,1},e_{3,2}\}$, 
so $\cl(\ker s_0^*)\le2$, 
while $e_{3,1}e_{3,2}=e_{2,1}e_{3,1}$ is a standard monomial and hence non-zero. Thus
$\cl(\ker s_0^*)=2$, which is unfortunately smaller than the upper bound $l=3$. 
so $\tc(d_0)\in\{2,3\}$.

For $m=2$ we consider $d_0\colon F_2\to F_2\rtimes F_2$. Then 
$$  e_{2,2}\,e_{3,1}\,e_{3,2}   \;=\;e_{2,2}\,e_{2,1}\,e_{3,1}
  \;=\;-\,e_{1,1}\,e_{2,1}\,e_{3,1}\;\neq\;0 ,$$
so $\cl(\ker s_0^*)=3$ and we obtain a precise value $\tc(d_0)=3$.
\end{example}
 
\begin{example}[upper-triangular McCool group $P\Sigma_n^{+}$, see \cite{Cohen}, Example 4.7]
Upper triangular McCool group $P\Sigma_n^{+}$ is an important and much studied subgroup 
of the group of basis-conjugating automorphisms of the free group of rank n. It can be represented 
as $P\Sigma_n^{+}=\bigrt_{i=1}^{n-1}F_i$, so $l=n-1$ and $d_i=i$. It is thus somewhat similar to the 
pure braid group, however the actions involved are quite different. By  \cite[Ex.~4.7]{Cohen} its 
cohomology ring is described as in Theorem \ref{thm:cohen}, where 
the ideal $J$ is generated by the classes $e_{i,p}e_{j,i+1}-e_{j,p}e_{j,i+1}$ with $1\le p\le i<j\le n-1$. 

Taking $p=i=1,\dots n-1$ and $j=n-1$, and applying at each
step  the relation to the two leftmost factors lowers one factor per step:
$$  e_{n-1,1}e_{n-1,2}\cdots e_{n-1,n-1}   \;=\;e_{1,1}\,e_{n-1,2}\cdots e_{n-1,n-1}=$$
$$  \;=\;e_{1,1}e_{2,2}\,e_{n-1,3}\cdots e_{n-1,n-1}   \;=\;\cdots\;=\;e_{1,1}e_{2,2}\cdots e_{n-1,n-1}. $$
Each substitution replaces a degree-two factor, so that no sign changes occur. The final
outcome is a standard monomial, one generator per level, hence a non-zero element. All its $n-1$
factors are at the top level and therefore belong to $T$ for every $m\ge1$; this gives
$$   \cl (\ker s_0^*)\;=\;n-1\;=\;\cd P\Sigma_n^{+}=tc(s_0)   \qquad\text{for all } 1\le m\le n-2 .$$
\end{example}


\begin{thebibliography}{99}

\bibitem{ArkowitzStrom2004}
M.~Arkowitz and J.~Strom, \emph{The sectional category of a map},
Proc.\ Roy.\ Soc.\ Edinburgh Sect.\ A \textbf{134} (2004), no.~4, 639--652.

\bibitem{Bieri1977}
R.~Bieri, \emph{Homological dimension of discrete groups},
Queen Mary College Mathematics Notes, Queen Mary College, London, 1976.

\bibitem{Brown}
K.~S.~Brown, \emph{Cohomology of groups},
Graduate Texts in Mathematics 87, Springer-Verlag, New York, 1982.

\bibitem{Cohen}
D.~C.~Cohen, \emph{Cohomology rings of almost-direct products of free groups},
Compositio Math.\ \textbf{146} (2010), no.~2, 465--479.

\bibitem{CohenFarberWeinberger2021}
D.~C.~Cohen, M.~Farber and S.~Weinberger,
\emph{Topology of parametrized motion planning algorithms},
SIAM J.\ Appl.\ Algebra Geom.\ \textbf{5} (2021), no.~2, 229--249.

\bibitem{CohenFarberWeinberger2022}
D.~C.~Cohen, M.~Farber and S.~Weinberger,
\emph{Parametrized topological complexity of collision-free motion planning
in the plane},
Ann.\ Math.\ Artif.\ Intell.\ \textbf{90} (2022), no.~10, 999--1015.

\bibitem{CLOT}
O.~Cornea, G.~Lupton, J.~Oprea and D.~Tanr\'e,
\emph{Lusternik--Schnirelmann category},
Mathematical Surveys and Monographs 103,
American Mathematical Society, Providence, RI, 2003.

\bibitem{DeSahaDranishnikov2024}
A.~De Saha and A.~Dranishnikov,
\emph{On cohomological dimension of homomorphisms},
Proc.\ Amer.\ Math.\ Soc.\ \textbf{152} (2024), no.~11, 4607--4621.

\bibitem{DranishnikovKuanyshov2023}
A.~Dranishnikov and N.~Kuanyshov,
\emph{On the LS-category of group homomorphisms},
Math.\ Z.\ \textbf{305} (2023), no.~1, Paper No.~14, 12~pp.

\bibitem{EilenbergGanea1957}
S.~Eilenberg and T.~Ganea,
\emph{On the Lusternik--Schnirelmann category of abstract groups},
Ann.\ of Math.\ (2) \textbf{65} (1957), no.~3, 517--518.

\bibitem{EBFMO}
A.~Espinosa Baro, M.~Farber, S.~Mescher and J.~Oprea,
\emph{Sequential topological complexity of aspherical spaces and sectional
categories of subgroup inclusions},
Math.\ Ann.\ \textbf{391} (2025), no.~3, 4555--4605.

\bibitem{FadellHusseini2001}
E. Fadell, S. Husseini, \emph{Geometry and Topology of Configuration Spaces}, 
Springer Monographs in Mathematics, Springer 2001.

\bibitem{Garcia-Calcines2014}
J.~M.~Garc\'ia-Calcines,
\emph{Whitehead and Ganea constructions for fibrewise sectional category},
Topology Appl.\ \textbf{161} (2014), 215--234.

\bibitem{Giraud1971}
J.~Giraud, \emph{Cohomologie non ab\'elienne},
Grundlehren Math.\ Wiss.\ 179, Springer-Verlag, Berlin, 1971.

\bibitem{GrantPTC}
M.~Grant,
\emph{Parametrised topological complexity of group epimorphisms},
Topol.\ Methods Nonlinear Anal.\ \textbf{60} (2022), no.~1, 287--303.

\bibitem{GrantLuptonOprea2015}
M.~Grant, G.~Lupton and J.~Oprea,
\emph{A mapping theorem for topological complexity},
Algebr.\ Geom.\ Topol.\ \textbf{15} (2015), no.~3, 1643--1666.

\bibitem{GrantMeirPatchkoria2022}
M.~Grant, E.~Meir and I.~Patchkoria,
\emph{Equivariant dimensions of groups with operators},
Groups Geom.\ Dyn.\ \textbf{16} (2022), no.~3, 1049--1075.

\bibitem{Hatcher}
A.~Hatcher, \emph{Algebraic topology},
Cambridge University Press, Cambridge, 2002.

\bibitem{KasselTuraev2008}
C. Kassel, V. Turaev, \emph{Braid Groups}, GTM 257, Springer 2008.

\bibitem{Kuanyshov2023}
N.~Kuanyshov,
\emph{On the LS-category of homomorphisms of groups with torsion},
Algebra Discrete Math.\ \textbf{36} (2023), no.~2, 166--178;
erratum, ibid.\ \textbf{38} (2024), no.~1, 59--62.

\bibitem{Kuanyshov2024a}
N.~Kuanyshov,
\emph{On the LS-category of homomorphism of almost nilpotent groups},
Topology Appl.\ \textbf{342} (2024), Article ID 108776, 9~pp.

\bibitem{Kuanyshov2024b}
N.~Kuanyshov,
\emph{On the sequential topological complexity of group homomorphisms},
Topology Appl.\ \textbf{356} (2024), Article ID 109045, 18~pp.

\bibitem{NSW}
J.~Neukirch, A.~Schmidt and K.~Wingberg,
\emph{Cohomology of number fields},
Grundlehren Math.\ Wiss.\ 323, Springer-Verlag, Berlin, 2000.

\bibitem{OT}
P.~Orlik and H.~Terao, \emph{Arrangements of hyperplanes},
Grundlehren Math.\ Wiss.\ 300, Springer-Verlag, Berlin, 1992.

\bibitem{Pavesic2017}
P.~Pave\v si\'c, \emph{Complexity of the forward kinematic map},
Mech.\ Mach.\ Theory \textbf{117} (2017), 230--243.

\bibitem{Pavesic2019a}
P.~Pave\v si\'c, \emph{Topological complexity of a map},
Homology Homotopy Appl.\ \textbf{21} (2019), no.~2, 107--130;
see also \texttt{arXiv:1809.09021v3} for corrections to the printed version.

\bibitem{Pavesic2024}
P.~Pave\v si\'c, \emph{Topological complexity of a map},
in: M.~Farber and J.~Gonz\'alez (eds.), Topology and AI: topological aspects
of algorithms for autonomous motion,
EMS Ser.\ Ind.\ Appl.\ Math.\ 4, EMS Press, Berlin, 2024, pp.~335--362.

\bibitem{Rudyak2016}
Y.~B.~Rudyak,
\emph{On topological complexity of Eilenberg--MacLane spaces},
Topology Proc.\ \textbf{48} (2016), 65--67.

\bibitem{Scott2022}
J.~Scott, \emph{On the topological complexity of maps},
Topology Appl.\ \textbf{314} (2022), Article ID 108094.

\end{thebibliography}
\end{document}